\newtheorem{theorem}{Theorem}[section]
\newtheorem{proposition}[theorem]{Proposition}
\newtheorem{remark}[theorem]{Remark}
\newtheorem{definition}[theorem]{Definition}
\newtheorem{example}[theorem]{Example}
\numberwithin{equation}{section}
\numberwithin{figure}{section}
\newcommand{\CM}{{\mathbb C}}
\newcommand{\NM}{{\mathbb N}}
\newcommand{\RM}{{\mathbb R}}
\newcommand{\TM}{{\mathbb T}}
\newcommand{\ZM}{{\mathbb Z}}
\newcommand*{\ev}{\mathrm{ev}}
\providecommand{\abs}[1]{\left \lvert#1 \right \rvert} 
\providecommand{\norm}[1]{\left \lVert#1 \right \rVert}
\DeclarePairedDelimiter{\tnorm}{\lvert\!\lvert\!\lvert}{\rvert\!\rvert\!\rvert}
\newcommand{\Aa}{{\mathcal A}}
\newcommand{\Bb}{{\mathcal B}}
\newcommand{\Gg}{{\mathcal G}}
\newcommand{\Uu}{{\mathcal U}}
\newcommand{\Xx}{{\mathcal X}}
\begin{document}
\title[]{Spectral continuity for étale groupoids with the Rapid decay property}

\author{Tom Stoiber}

\address{Department of Mathematics, University of California, Irvine, CA 92717, USA \\
	\href{mailto:tstoiber@uci.edu}{tstoiber@uci.edu}}

\date{\today}

\begin{abstract}
We show that the reduced groupoid $C^*$-algebras of continuous fields of étale groupoids satisfying the rapid decay property yield continuous fields of $C^*$-algebras. This establishes a new sufficient criterion that applies in the non-amenable case where the full and reduced groupoid algebras may differ. Potential applications include convergence of spectra in inverse systems of finite-index subgroups and magnetic models on hyperbolic lattices.
\end{abstract}

\maketitle

\setcounter{tocdepth}{1}

\section{Introduction and Main Statements}
\label{Sec:Introduction}

A field of groupoids (sometimes also referred to as a groupoid bundle) is a topological groupoid $\Gg$ which decomposes into a disjoint union $\Gg:=\sqcup_{t\in T} \Gg_t$ of subgroupoids parametrized by a topological space $T$, but which is generally not supplied with the disjoint union topology. They appear naturally in many contexts such as pseudodifferential calculus (e.g. via the tangent groupoid \cite{ConnesBook94,Higson2008}), twisted group algebras and topological dynamics (see \cite{Beckus18} and references therein). In those cases one often also deals with families of self-adjoint operators in or affiliated to the reduced groupoid algebras $C^*_r(\Gg_t)$ of the fibers and wants to study the dependence of their spectral or topological data (e.g. K-theory classes) on the parameter. 

A central question in this context is whether the reduced algebra $C^*_r(\Gg)$ decomposes as a continuous field of $C^*$-algebras with fibers $C_r^*(\Gg_t)$. This is a very useful property because it implies that for any continuous section of self-adjoint operators the spectra depend continuously on $t$ in the Hausdorff metric. A systematic study of this question was undertaken in \cite{LandsmanContMath2001} and it was found that when $\Gg$ is amenable, then its reduced groupoid $C^*$-algebra indeed defines a continuous field:

\begin{theorem}[{\cite[Theorem 5.5]{LandsmanContMath2001}}]
\label{th:amenable}
Let $\Gg$ be a locally compact, second-countable Hausdorff groupoid with left-continuous Haar system forming a field of groupoids over a locally compact Hausdorff space $T$ with fibers $(\Gg_t)_{t\in T}$. If $\Gg$ is amenable then full and reduced groupoid algebras coincide $C^*_r(\Gg) = C^*(\Gg)$ and decompose as a continuous field of $C^*$-algebras with fibers $C^*_r(\Gg_t)$.
\end{theorem}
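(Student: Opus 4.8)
The plan is to realise $C^*(\Gg)$ as a $C_0(T)$-algebra and then verify separately that its fibres are the $C^*(\Gg_t)$ and that the resulting field is continuous, invoking amenability only for the latter (and to pass between full and reduced algebras). The $C_0(T)$-structure is essentially free: the decomposition $\Gg=\sqcup_t\Gg_t$ gives a continuous, open anchor map $p\colon\Gg^{(0)}\to T$ (openness being part of the field data), and $\phi\mapsto\phi\circ p\in C_b(\Gg^{(0)})$ defines a nondegenerate $*$-homomorphism $C_0(T)\to ZM(C^*(\Gg))$ acting on $C_c(\Gg)$ by $(\phi\cdot f)(\gamma)=\phi(p(s(\gamma)))f(\gamma)$. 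Writing $I_t=\overline{C_0(T\setminus\{t\})\cdot C^*(\Gg)}$ and $A_t=C^*(\Gg)/I_t$, this makes $(A_t)_{t\in T}$ an upper semicontinuous field automatically, so the real content is (i) $A_t\cong C^*(\Gg_t)$ and (ii) lower semicontinuity of $t\mapsto\|a+I_t\|$.

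For (i): since $T$ is Hausdorff, $\{t\}$ is closed, so $\Gg_t=\Gg|_{p^{-1}(t)}$ is a closed, saturated subgroupoid and restriction $r_t\colon C_c(\Gg)\to C_c(\Gg_t)$ is a surjective $*$-homomorphism, surjectivity coming from a Tietze-type extension of compactly supported functions off the closed set $p^{-1}(t)$. An integrated representation of $\Gg_t$ extends by zero to a representation of $\Gg$ (the measure being concentrated on $p^{-1}(t)$, over which $\Gg$ and $\Gg_t$ have the same source-fibres), so $r_t$ is contractive for the full norms and extends to a surjection $q_t\colon C^*(\Gg)\twoheadrightarrow C^*(\Gg_t)$ that kills $I_t$. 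The reverse inclusion $\ker q_t\subseteq I_t$ is the \emph{crux}: by Renault's disintegration theorem a representation of $C_c(\Gg)$ vanishing on $I_t$ disintegrates over a quasi-invariant measure $\mu$ on $\Gg^{(0)}$, and vanishing on $C_0(T\setminus\{t\})$ forces $\mu$ to be concentrated on $p^{-1}(t)$; hence the underlying groupoid representation is really a representation of $\Gg_t$ and factors through $q_t$, giving $\ker q_t=I_t$ and $A_t\cong C^*(\Gg_t)$. Identifying the quotient with the honest algebra $C^*(\Gg_t)$, rather than with some intermediate completion, is the main obstacle, and it is exactly why one argues with the full (universal) algebra here.

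Now amenability enters: $\Gg$ amenable gives $C^*(\Gg)=C^*_r(\Gg)$ (Anantharaman-Delaroche--Renault), and since amenability restricts to closed saturated subgroupoids, each $\Gg_t$ is amenable and $C^*(\Gg_t)=C^*_r(\Gg_t)$. Thus $C^*_r(\Gg)=C^*(\Gg)$ is an upper semicontinuous $C_0(T)$-algebra with fibres $C^*_r(\Gg_t)$, and it only remains to upgrade upper semicontinuity to continuity.

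For (ii), work on the dense subalgebra $C_c(\Gg)$: for $f\in C_c(\Gg)$ one has $\|f\|_{C^*_r(\Gg_t)}=\sup_{x\in p^{-1}(t)}\|\lambda_x(f)\|$, where $\lambda_x$ is the left regular representation on $L^2(\Gg^x,\lambda^x)$ and depends only on $f|_{\Gg_t}$, and $x\mapsto\|\lambda_x(f)\|$ is lower semicontinuous on $\Gg^{(0)}$, being a supremum of the continuous functions $x\mapsto\|\lambda_x(f)\xi_x\|$ over norm-one continuous compactly supported vector fields $\xi$ (continuity here using left-continuity of the Haar system). Given $t$ and $\varepsilon>0$, pick $x\in p^{-1}(t)$ and such a $\xi$ with $\|f\|_{C^*_r(\Gg_t)}-\varepsilon<\|\lambda_x(f)\xi_x\|$; this inequality persists on a neighbourhood $U\ni x$ in $\Gg^{(0)}$, and openness of $p$ makes $p(U)$ a neighbourhood of $t$ that meets $p^{-1}(s)\cap U$ for each $s\in p(U)$, whence $\|f\|_{C^*_r(\Gg_s)}>\|f\|_{C^*_r(\Gg_t)}-\varepsilon$ there. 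So $t\mapsto\|f\|_{C^*_r(\Gg_t)}$ is lower semicontinuous for $f$ in the dense subalgebra $C_c(\Gg)$, hence—together with the automatic upper semicontinuity—continuous on all of $C^*_r(\Gg)$, which is precisely the statement that $(C^*_r(\Gg_t))_{t\in T}$ is a continuous field with total algebra $C^*_r(\Gg)$.
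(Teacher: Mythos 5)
The paper offers no proof of this theorem---it is quoted verbatim from Landsman--Ramazan \cite[Theorem 5.5]{LandsmanContMath2001}---and your argument is a correct reconstruction of exactly the proof given there: upper semicontinuity comes for free from the $C_0(T)$-algebra structure on the \emph{full} algebra once the fibres $C^*(\Gg)/I_t$ are identified with $C^*(\Gg_t)$ via Renault's disintegration theorem, lower semicontinuity of $t\mapsto\lVert f\rVert_{C^*_r(\Gg_t)}$ is proved on $C_c(\Gg)$ by exhibiting it locally as a supremum of continuous functions built from the regular representations (this is the same argument the paper reuses as Proposition~\ref{prop:lower_semi-cont}), and amenability merges the two one-sided statements. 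No gaps; the only points worth stating explicitly are that lower semicontinuity passes from the dense subalgebra to the completion by uniform approximation of the norm functions, and that openness of $p$ restricted to $\Gg^{(0)}$ follows from openness of $p$ and of the source map.
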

The analogous statement also holds for groupoid algebras which are twisted by a continuous $2$-cocycle \cite{Beckus18}.  There are counterexamples where $C_r^*(\Gg)$ cannot be a continuous field with fibers $C_r^*(\Gg_t)$, in particular HLS groupoids \cite{HigsonLafforgueSkandalis2002} as we recall in Section~\ref{sec:examples}. In contrast, no readily checkable sufficient criteria seem to be known in the non-amenable case.

In this paper we provide one such condition:
\begin{theorem}
\label{th:main}
Let $\Gg$ be a locally compact, second-countable Hausdorff \'etale groupoid twisted by a 2-cocycle $\omega$ and which is a field of groupoids over a locally compact Hausdorff space $T$ with fibers $(\Gg_t)_{t\in T}$. If there exists a length function on $\Gg$ such that $\Gg$ has the $\omega$-twisted rapid decay property then $C_r^*(\Gg,\omega)$ is a continuous field of $C^*$-algebras with fibers $C_r^*(\Gg_t, \omega\rvert_{\Gg_t})$.
\end{theorem}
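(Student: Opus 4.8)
The plan is to exhibit $C^*_r(\Gg,\omega)$ as a $C_0(T)$-algebra, to identify its fibres with the $C^*_r(\Gg_t,\omega|_{\Gg_t})$, and then to use the rapid decay inequality to rewrite the a priori only upper semicontinuous fibre norms as suprema of quantities that are manifestly continuous in $t$. The soft part comes first: pulling $C_0(T)$ back along the anchor map $\Gg^{(0)}\to T$ (which I take to be open, as is part of the field-of-groupoids setup underlying Theorem~\ref{th:amenable}) makes $C^*_r(\Gg,\omega)$ a $C_0(T)$-algebra, because the cocycle cannot mix fibres; since $\Gg_t^{(0)}$ is closed and invariant, the ideal over $t$ is the reduced algebra of the restriction of $(\Gg,\omega)$ to the open invariant set $U:=\Gg^{(0)}\setminus\Gg_t^{(0)}$, and restriction of functions descends to a canonical surjection $q_t\colon A_t:=C^*_r(\Gg,\omega)/C^*_r(\Gg|_U,\omega)\twoheadrightarrow C^*_r(\Gg_t,\omega|_{\Gg_t})$. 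Standard $C_0(T)$-algebra theory gives upper semicontinuity of $t\mapsto\|a_t\|_{A_t}$ and, applied to the commutative subalgebra $C_0(\Gg^{(0)})$, continuity of $t\mapsto\|\varphi_t\|$ for every $\varphi\in C_0(\Gg^{(0)})$. So it remains to show (i) each $q_t$ is isometric, and (ii) $t\mapsto\|\bar f\|_{A_t}$ is lower semicontinuous for $f$ in a dense subalgebra.

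Both follow from a reformulation of rapid decay. I would first prove, for $f\in C_c(\Gg,\omega)$ supported in $\{\ell\le R\}$,
\[
\|f\|_{C^*_r(\Gg,\omega)}^{2}\;=\;\sup_{n\ge 1}\,\bigl\|\,(f^{*}\!* f)^{* n}\big|_{\Gg^{(0)}}\,\bigr\|_{\infty}^{1/n},
\]
where $(\,\cdot\,)^{* n}$ denotes the $n$-fold convolution power; the supremum is attained in the limit because $n\mapsto\|h^{* n}|_{\Gg^{(0)}}\|_\infty^{1/n}$ is nondecreasing for positive $h$ (the spectral measures of the unit vectors $\delta_x$ are probability measures). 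The inequality ``$\ge$'' is the elementary bound $\|h\|_2\le\|h\|_{C^*_r}$ with $h=(f^{*}* f)^{* n}$, where $\|\cdot\|_2$ is the supremum over units $x$ of the $\ell^2$-norm of the restriction to $\Gg^x$ (a quantity unaffected by the twist, since $|\omega|=1$); ``$\le$'' uses the rapid-decay estimate $\|(f^{*}* f)^{* n}\|_{C^*_r}\le P(2nR)\|(f^{*}* f)^{* n}\|_2$ together with $\|(f^{*}* f)^{* n}\|_{C^*_r}=\|f\|_{C^*_r}^{2n}$ and $P(2nR)^{1/n}\to1$. The same identity then holds with $C^*_r(\Gg,\omega)$ and $\Gg^{(0)}$ replaced by $A_t$ and $\Gg_t^{(0)}$, once one knows that $A_t$ inherits rapid decay with the same polynomial $P$ — this is the one genuinely new estimate and is discussed below.

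Granting the reformulation for $A_t$, assertion (ii) is immediate: $(f^{*}* f)^{* n}|_{\Gg^{(0)}}$ lies in $C_c(\Gg^{(0)})$, so $t\mapsto\|((f^{*}* f)^{* n}|_{\Gg^{(0)}})_t\|_\infty$ is continuous for each $n$, and $\|\bar f\|_{A_t}^{2}$ is their pointwise supremum over $n$, hence lower semicontinuous. For (i), invariance of $\Gg_t^{(0)}$ forces $\Gg^x\subseteq\Gg_t$ for $x\in\Gg_t^{(0)}$, so restriction $C_c(\Gg,\omega)\to C_c(\Gg_t,\omega|_{\Gg_t})$ is a $*$-homomorphism and the regular representations at such $x$ coincide; consequently the $\|\cdot\|_2$-norm computed in $A_t$ equals $\|q_t(\,\cdot\,)\|_2$ computed in $C^*_r(\Gg_t,\omega|_{\Gg_t})$, which is in turn dominated there by $\|q_t(\,\cdot\,)\|_{C^*_r(\Gg_t,\omega|_{\Gg_t})}$. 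Feeding this into rapid decay for $A_t$ applied to the powers $\overline{(g^{*}* g)^{* n}}$ gives $\|\bar g\|_{A_t}^{2n}\le P(2nR)\,\|g_t\|_{C^*_r(\Gg_t,\omega|_{\Gg_t})}^{2n}$ for $g\in C_c(\Gg,\omega)$ supported in $\{\ell\le R\}$, and letting $n\to\infty$ shows $q_t$ is isometric on a dense subalgebra, hence an isomorphism. Combining (i) and (ii) with the automatic upper semicontinuity and a density argument (such $g$ are dense and all restriction maps are contractive), $C^*_r(\Gg,\omega)$ is a $C_0(T)$-algebra with fibres $C^*_r(\Gg_t,\omega|_{\Gg_t})$ on which the fibre norm is continuous, i.e.\ a continuous field; the claimed consequence for spectra of self-adjoint sections is then the usual one.

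The hard step is that $A_t$ satisfies rapid decay with the same polynomial, which I would deduce from the following statement about quotient norms: for $g\in C_c(\Gg,\omega)$ supported in $\{\ell\le R\}$ and any $\varepsilon>0$ there is a representative $g'\equiv g\pmod{C^*_r(\Gg|_U,\omega)}$ still supported in $\{\ell\le R\}$ with $\|g'\|_2\le\|g_t\|_2+\varepsilon$; rapid decay for $\Gg$ then bounds $\|\bar g\|_{A_t}\le\|g'\|_{C^*_r(\Gg,\omega)}\le P(R)(\|g_t\|_2+\varepsilon)$, and $\varepsilon$ is arbitrary. The content is to move the $\ell^2$-mass of $g$ lying over $U$ towards $\Gg_t^{(0)}$ without enlarging the length support or leaving the coset, and this forces an analysis of the boundary $\partial(\Gg|_U)$: since $\Gg_t^{(0)}$ is closed and invariant, one checks $\partial(\Gg|_U)\subseteq\Gg_t$, so a cut-off $\psi\in C_c(\Gg|_U)$ equal to $1$ away from a small neighbourhood of $\Gg_t$ produces $g':=g-\psi g\in C_c(\Gg,\omega)$, which agrees with $g$ over $\Gg_t^{(0)}$, is supported where $g$ is, lies in the same coset, and whose surviving mass over $U$ sits over points near $\Gg_t^{(0)}$ and is controlled by uniform continuity of $g^{*}* g$. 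I expect the careful choice of $\psi$, and the verification that openness of $\Gg^{(0)}\to T$ and invariance of $\Gg_t^{(0)}$ are precisely what make this boundary estimate go through, to be the bulk of the proof.
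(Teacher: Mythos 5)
Your argument is essentially correct, but it takes a genuinely different route from the paper's. The paper never invokes the $C_0(T)$-algebra/quotient picture: it treats upper semicontinuity as the hard direction and proves it by a functional-calculus trick (if $t\mapsto\lVert \ev_t(h)\rVert$ jumped up at $t_0$, a bump function $\varphi$ localized near the limiting norm value would make $\varphi(h)$ vanish at $t_0$ yet have norm $\geq 1$ nearby), with rapid decay entering only to show that $H^\infty(\Gg,\omega)$ is closed under smooth functional calculus (via the derivation $[l,\cdot]$ and the Helffer--Sj\"ostrand formula) and that its elements admit zero-preserving approximations; ``upper semicontinuity at zero'' for $C_c(\Gg)$ is then a finite-bisection covering argument, and lower semicontinuity is quoted from Landsman--Ramazan. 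You instead get upper semicontinuity for free from the $C_0(T)$-structure and concentrate all the work in (a) the spectral-radius reformulation $\lVert f\rVert^2=\sup_n\lVert (f^**f)^{*n}\rvert_{\Gg^{(0)}}\rVert_\infty^{1/n}$, a standard consequence of rapid decay which correctly yields lower semicontinuity as a supremum of functions that are continuous because $p\rvert_{\Gg^{(0)}}$ is open (this follows from openness of $p$ together with $p=p\circ s$ and $s$ being a local homeomorphism), and (b) the identification of the $C_0(T)$-fibre with $C_r^*(\Gg_t,\omega\rvert_{\Gg_t})$, i.e.\ exactness of $0\to C_r^*(\Gg\rvert_U,\omega)\to C_r^*(\Gg,\omega)\to C_r^*(\Gg_t,\omega\rvert_{\Gg_t})\to 0$. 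Point (b) is where the entire content sits --- it is precisely what fails for HLS groupoids --- and your reduction of it to the coset-representative lemma (find $g'\equiv g$ modulo the ideal, still supported in $\{l\le R\}$, with $\lVert g'\rVert_{2}\le\lVert \ev_t(g)\rVert_2+\varepsilon$) is sound; that lemma plays the role of the paper's upper-semicontinuity-at-zero and zero-preserving-approximation propositions, and your sketch with $g'=(1-\psi)g$ does go through, since $\sum_{\gamma\in\Gg_u}\lvert g(\gamma)\rvert^2(1+l(\gamma))^{2p}$ is the restriction to the unit space of $g_w^{*}*g_w$ for the compactly supported continuous function $g_w=g\cdot(1+l)^{p}$, so a compactness argument bounds the supremum over $u\in s(V)$ by the supremum over $\Gg_t^{(0)}$ plus $\varepsilon$. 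What your route buys is the stronger conclusion of fibrewise exactness and a proof free of functional calculus; what it costs is that every step (the spectral-radius formula in each fibre, rapid decay for each $\Gg_t$ and for each quotient $A_t$ with a uniform polynomial, and the unit-space normalization of $\omega$ needed to identify $(f^**f)^{*n}(u)$ with $\langle\delta_u,\pi^\omega_u(f^**f)^n\delta_u\rangle\ge0$) must be threaded through the one approximation lemma, which you have only sketched; writing it out carefully would complete the proof.
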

The generalization of the rapid decay property from the group case \cite{Jolissaint} to (twisted) groupoids has been developed in \cite{Hou,Weygandt} and, in particular, is also satisfied by a large class of groupoids that are not amenable.

The structure of the paper is as follows. In Section~\ref{Sec:Definitions} we recall the necessary definitions regarding fields of groupoids and $C^*$-algebras. The proof of Theorem~\ref{th:main} then consists of two steps: In Section~\ref{Sec:Continuity} we use functional calculus to reduce the proof of continuity to existence of a dense subset of sections which is closed under smooth functional calculus and which admits zero-preserving approximations. In Section~\ref{Sec:RapidDecay}, we then recall facts about the twisted rapid decay property for groupoids and show that it implies the sufficient conditions developed in the previous section. The final Section~\ref{sec:examples} gives examples illustrating the range of applicability of our result, in particular HLS groupoids and spectral continuity of the discrete magnetic Laplacian on hyperbolic crystals.
\vspace{0.1cm}

\noindent{\small{{\bf Acknowledgements}\;\; The author thanks Emil Prodan for helpful discussions. This work was partially supported by the grants NSF DMS-2052899, DMS-2155211, Simons-681675, as well as the German research foundation Project-ID 521291358.}}

\section{Definitions}
\label{Sec:Definitions}

Throughout this paper, $\Gg$ will denote a topological Hausdorff groupoid which is locally compact, second-countable (lcsc) and \'etale. The set of composable elements is denoted by $\Gg^{(2)}\subset \Gg\times \Gg$ and as usual $\Gg_u=s^{-1}(\{u\})$ and $\Gg^u=r^{-1}(\{u\})$ for elements of the unit space $u\in \Gg^{(0)}$. Recall that a topological groupoid is \'etale if the source and range maps $s,r: \Gg\to \Gg^{(0)}$ are local homeomorphisms. An open bisection $U\subset \Gg$ is an open subset such that the restrictions of the source and range map are both homeomorphisms onto open subsets of $\Gg^{(0)}$. Since $\Gg$ is \'etale the open bisections form a basis of its topology and the counting measures define a canonical Haar system \cite[Proposition I.2.8]{RenaultBook}. 

We allow twists by a continuous groupoid $2$-cocycle $\omega: \Gg^{(2)} \to \Uu(\CM)\simeq \TM$, i.e. a continuous function satisfying the cocycle condition
$$\omega(\gamma_1,\gamma_2)\omega(\gamma_1\gamma_2,\gamma_3)=\omega(\gamma_1,\gamma_2\gamma_3)\omega(\gamma_2,\gamma_3)$$
for all $\gamma_i\in \Gg$ such that $(\gamma_1,\gamma_2),(\gamma_2,\gamma_3)\in \Gg^{(2)}$. The twisted convolution algebra $C_c(\Gg,\omega)$ supplies the compactly supported continuous functions with the algebraic structures \cite{RenaultBook}
\begin{align}\label{eq:convolution}
(f_1*f_2)(\gamma)&=\sum_{\substack{\eta\in \Gg\\ s(\eta)=s(\gamma)}} \omega(\gamma\eta^{-1},\eta) f_1(\gamma\eta^{-1}) f_2(\eta)\\
(f^*)(\gamma)&=\overline{\omega(\gamma,\gamma^{-1})} \,\overline{f(\gamma^{-1})}.
\end{align}
For each element $u\in \Gg^{(0)}$ of the unit space we have a representation $\pi^\omega_u$ of the twisted convolution algebra $C_c(\Gg, \omega)$ on $\ell^2(\Gg_u)$ given by the formula
$$(\pi^\omega_u(f)\xi)(\gamma) =\sum_{\eta \in \Gg_u} \omega(\gamma\eta^{-1},\eta) f(\gamma\eta^{-1})\xi(\eta).$$
The reduced groupoid $C^*$-algebra $C_r^*(\Gg,\omega)$ is the completion of $C_c(\Gg, \omega)$ w.r.t. the norm
$$\lVert f\rVert = \sup_{u \in \Gg^{(0)}} \lVert \pi^\omega_u(f)\rVert. $$
On $C_c(\Gg)$ one also considers the pair of $L^1$-norms 
$$\norm{f}_{I,s}=\sup_{u \in \Gg^{(0)}} \sum_{\gamma \in \Gg_u} \abs{f(\gamma)}, \qquad \norm{f}_{I,r}=\sup_{u \in \Gg^{(0)}} \sum_{\gamma \in \Gg^u} \abs{f(\gamma)}.$$
For the reduced norm one has $$\norm{f}_{C^*_r(\Gg,\omega)}\leq \norm{f}_I:=\max(\norm{f}_{I,s},\norm{f}_{I,r}).$$ The full $C^*$-algebra $C^*(\Gg,\omega)$ is defined by completing $C_c(\Gg,\omega)$ by the supremum over all norms of representations that are dominated by $\norm{\cdot}_I$. 

\begin{definition}
\label{def:cont_field_groupoids}
A field of groupoids over a topological space $T$ is a tuple $(\Gg,T,p)$ consisting of a topological groupoid $\Gg$ with $p:\Gg \to T$ a continuous open surjection such that $p = p \circ s = p \circ r$.
\end{definition}
The fibers of a field $\Gg_t=p^{-1}(t)$ are closed subgroupoids and as a set $\Gg$ is just the disjoint union $\sqcup_{t\in T} \Gg_t$.

For continuous fields of $C^*$-algebras slightly different variant definitions exist, but they are mostly equivalent up to differences in terminology. To be compatible with \cite{LandsmanContMath2001} we follow \cite{KirchbergWassermann1995}:
\begin{definition}
\label{def:cont_field_algebras}
Let $T$ be a locally compact Hausdorff space.  
A bundle of $C^{*}$-algebras over $T$ is a triple $\bigl(\mathcal \Aa,\{\Aa_{t}\}_{t\in T},\{\ev_{t}\}_{t\in T}\bigr)$ consisting of $C^*$-algebras $\Aa$ and $\Aa_t$ as well as surjective $*$-homomorphisms $\ev_{t}:\mathcal \Aa \to \Aa_{t}$, such that
\begin{itemize}
  \item[(i)] there is an action of $C_0(T)$ on $\Aa$ such that $\ev_t(f\cdot a) = f(t)\ev_t(a)$ for all $f\in C_{0}(T)$, $a\in \Aa$ and $t\in T$;
  \item[(ii)] for all $a\in\mathcal \Aa$ one has $\norm{a}=\sup_{t\in T}\norm{\ev_{t}(a)}$.
\end{itemize}
A bundle of $C^*$-algebras is a continuous field if in addition the function
\begin{equation}
\label{eq:normfunction} t\in T \mapsto \lVert  \ev_t(a)\rVert
\end{equation}
is in $C_0(T)$ for each $a\in \Aa$.
\end{definition}

An important consequence of the continuity of \eqref{eq:normfunction} is spectral continuity:

\begin{proposition}[{\cite{Beckus18}}]
\label{prop:spec_cont}
For a continuous field of $C^*$-algebras $\bigl(\mathcal \Aa,\{\Aa_{t}\}_{t\in T},\{\ev_{t}\}_{t\in T}\bigr)$ over a locally compact Hausdorff space $T$ and any self-adjoint element $h\in \Aa$ the map
	$$t\in T \mapsto \sigma(\ev_t(h)) \subset{\RM}$$
	is continuous w.r.t the Hausdorff metric on the compact subsets of $\RM$.
\end{proposition}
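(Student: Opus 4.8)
The plan is to verify continuity of $t \mapsto K_t := \sigma(\ev_t(h))$ at an arbitrary point $t_0 \in T$. Since $\ev_t$ is a $*$-homomorphism we have $\|\ev_t(h)\| \le \|h\|$, so every $K_t$ is a nonempty compact subset of the fixed interval $I := [-\|h\|, \|h\|]$, and it suffices to make the Hausdorff distance $d_H(K_t, K_{t_0})$ small on a neighbourhood of $t_0$. We may moreover assume $\Aa$ and all $\Aa_t$ are unital with $\ev_t$ unital, since the unitization of a continuous field of $C^*$-algebras is again a continuous field, with fibers $\widetilde{\Aa_t}$ and evaluations $\widetilde{\ev_t}$.

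The heart of the matter is the following claim: for every $f \in C(I,\RM)$ the function $t \mapsto \max_{\lambda \in K_t} f(\lambda)$ is continuous. To prove it, put $L := \|f\|_{\infty,I}$, so that $f + L \ge 0$ on $I$. Then $(f+L)(h)$ is a well-defined positive element of $\Aa$, obtained by continuous functional calculus inside $C^*(1,h)$, and since $\ev_t$ is a unital $*$-homomorphism, $\ev_t\bigl((f+L)(h)\bigr) = (f+L)(\ev_t(h))$ is positive with spectrum $(f+L)(K_t)$ by the spectral mapping theorem. As the norm of a positive element equals the maximum of its spectrum, $\bigl\|\ev_t\bigl((f+L)(h)\bigr)\bigr\| = L + \max_{\lambda \in K_t} f(\lambda)$; the left-hand side depends continuously on $t$ by the defining continuity of the norm function \eqref{eq:normfunction}, which proves the claim. (Alternatively one first reduces to $f$ a real polynomial via Stone--Weierstrass and uses directly that $\ev_t$ commutes with polynomial functional calculus.)

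Upper semicontinuity of $K_t$ follows by applying the claim to $f_0(\lambda) := \max\bigl(0, \operatorname{dist}(\lambda, K_{t_0}) - \varepsilon/2\bigr)$: since $\max_{\lambda \in K_{t_0}} f_0(\lambda) = 0$, there is a neighbourhood $V$ of $t_0$ on which $\max_{\lambda \in K_t} f_0(\lambda) < \varepsilon/2$, which forces $\operatorname{dist}(\lambda, K_{t_0}) < \varepsilon$ for every $\lambda \in K_t$. For lower semicontinuity suppose, for contradiction, that there are $t_n \to t_0$ and $\mu_n \in K_{t_0}$ with $\operatorname{dist}(\mu_n, K_{t_n}) \ge \varepsilon$; passing to a subsequence with $\mu_n \to \mu \in K_{t_0}$ we get $\operatorname{dist}(\mu, K_{t_n}) \ge \varepsilon/2$ for large $n$, so the bump function $\phi(\lambda) := \max\bigl(0, \varepsilon/4 - |\lambda - \mu|\bigr)$ satisfies $\max_{\lambda \in K_{t_0}} \phi(\lambda) \ge \phi(\mu) = \varepsilon/4$ while $\max_{\lambda \in K_{t_n}} \phi(\lambda) = 0$, contradicting the claim. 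Combining the two estimates gives $d_H(K_t, K_{t_0}) < \varepsilon$ for $t$ near $t_0$.

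The only genuine subtlety is the claim itself — the observation that Hausdorff continuity of the spectra is controlled entirely by norm-continuity of the handful of functional-calculus sections $(f+L)(h)$ used in the bump-function estimates; everything else is soft point-set topology. The one technical wrinkle, namely that applying a function with $f(0) \ne 0$ to $h$ requires a unit, is disposed of by the unitization step; it can also be avoided by running the same scheme with the resolvents $(h-z)^{-1}$, $z \in \CM \setminus \RM$, using $\|(\ev_t(h)-z)^{-1}\| = \operatorname{dist}(z, K_t)^{-1}$ together with the fact that the Hausdorff distance of compact subsets of $\RM$ is the sup-distance of their distance functions.
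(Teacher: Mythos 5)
The paper does not actually prove this proposition --- it is imported from \cite{Beckus18} with no in-text argument --- so there is nothing to compare against except the cited reference, whose proof your argument essentially reproduces: reduce Hausdorff continuity of $t\mapsto K_t$ to continuity of $t\mapsto\max_{\lambda\in K_t}f(\lambda)$ for continuous $f$, obtain the latter from $\bigl\lVert(f+L)(\ev_t(h))\bigr\rVert=L+\max_{K_t}f$ via positivity, the spectral mapping theorem and continuity of the norm functions, and then feed in the two bump functions. The substance is correct, but three points deserve more care than you give them. First, your unitization step is not literally compatible with Definition~\ref{def:cont_field_algebras} when $T$ is non-compact: $t\mapsto\lVert\ev_t(1)\rVert\equiv1$ lies in $C_b(T)$ but not in $C_0(T)$, so the unitized object is not a continuous field in the sense used here; what you actually need is only that $t\mapsto\lVert\ev_t(g(h))+c\,1_t\rVert$ is continuous for $g\in C_0(\RM)$ and $c\in\RM$, which is true but is a small lemma in its own right (upper semicontinuity from the quotient description of $\ev_t$, lower semicontinuity from testing against elements of the fiber ideal) --- or is bypassed cleanly by the resolvent variant you mention at the end, which I would promote to the main line of the proof. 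Second, for non-unital fibers one must fix the convention $K_t=\sigma_{\widetilde{\Aa_t}}(\ev_t(h))\ni0$ uniformly in $t$, since the statement is sensitive to whether $0$ is counted as a spectral point. Third, in the lower-semicontinuity step sequences should be replaced by nets, or better by the standard finite cover of $K_{t_0}$ by $\varepsilon$-balls, since $T$ is only assumed locally compact Hausdorff and need not be first countable. With these repairs the argument is complete and self-contained.
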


Conversely, a triple $\bigl(\mathcal \Aa,\{\Aa_{t}\}_{t\in T},\{\ev_{t}\}_{t\in T}\bigr)$ which does not satisfy this spectral continuity cannot be a continuous field of $C^*$-algebras.

\section{Continuity from dense sets of sections}
\label{Sec:Continuity}
We will construct continuous fields from a suitable dense subset of sections:
\begin{proposition}
\label{prop:fields_by_sections}
Let $(\Aa_t)_{t\in T}$ be $C^*$-algebras parametrized by a locally compact Hausdorff space $T$. Consider a set $\Xx \subset \prod_{t\in T}\Aa_t$  with the following properties:
\begin{enumerate}
	\item[(i)]  $\Xx$ is a $*$-subalgebra of $\prod_{t\in T}\Aa_t$.
	\item[(ii)] The image $\ev_t: \Xx \to \Aa_t$ of each evaluation map is norm-dense.
    \item[(iii)] Pointwise multiplication by $C_0(T)$ leaves $\Xx$ invariant.
    \item[(iv)] For each $x\in \Xx$ one has $\sup_{t\in T}\lVert \ev_t(x)\rVert<\infty$.
\end{enumerate}
Then the completion of $\Xx$ w.r.t. the $C^*$-norm
\begin{equation}
\label{eq:sup_norm} 
\lVert x\rVert = \sup_{t\in T} \lVert \ev_t(x)\rVert
\end{equation}
defines a bundle of $C^*$-algebras $\Aa$ with fibers $\Aa_t$. If in addition the norm function \begin{equation}
	    \label{eq:norm_function}
        t\in T\mapsto \lVert \ev_t(x)\rVert
	\end{equation}
	is in $C_0(T)$ for each $x\in \Xx$ then $\Aa$ is a continuous field of $C^*$-algebras.
\end{proposition}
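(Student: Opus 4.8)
\textit{Proof proposal.}
The plan is to verify directly the axioms of Definition~\ref{def:cont_field_algebras}. First I would check that \eqref{eq:sup_norm} is a genuine $C^*$-norm on $\Xx$: it is finite by hypothesis (iv); it is a norm and not merely a seminorm because $\Xx\subseteq\prod_{t\in T}\Aa_t$, so $\ev_t(x)=0$ for all $t$ forces $x=0$; and submultiplicativity together with the $C^*$-identity $\|x^*x\|=\|x\|^2$ follow fibrewise from the corresponding relations in each $\Aa_t$ by taking suprema (using that $\lambda\mapsto\lambda^2$ is monotone on $[0,\infty)$). Since $\Xx$ is a $*$-algebra by (i), its completion $\Aa$ with respect to \eqref{eq:sup_norm} is a $C^*$-algebra.

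Next I would produce the evaluation maps and the module structure on $\Aa$. Each $\ev_t\colon\Xx\to\Aa_t$ is a $*$-homomorphism which is contractive for \eqref{eq:sup_norm}, hence extends uniquely to a $*$-homomorphism $\ev_t\colon\Aa\to\Aa_t$. Its range contains the dense subset $\ev_t(\Xx)\subseteq\Aa_t$ furnished by (ii) and is closed, being the image of a $*$-homomorphism of $C^*$-algebras; therefore $\ev_t$ is surjective. Pointwise multiplication by $f\in C_0(T)$ leaves $\Xx$ invariant by (iii) and obeys $\|f\cdot x\|=\sup_{t}|f(t)|\,\|\ev_t(x)\|\le\|f\|_\infty\|x\|$, so it extends to an action of $C_0(T)$ on $\Aa$ satisfying $\ev_t(f\cdot a)=f(t)\ev_t(a)$; the module identities, valid on $\Xx$, persist on $\Aa$ by continuity. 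This is axiom (i) of Definition~\ref{def:cont_field_algebras}.

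The step that does the real work is axiom (ii), namely $\|a\|=\sup_{t\in T}\|\ev_t(a)\|$ for every $a\in\Aa$, and the same mechanism also yields the continuity statement. Here I would exploit that, since every $\ev_t$ is contractive, any $x\in\Xx$ with $\|x-a\|$ small satisfies $\bigl|\,\|\ev_t(x)\|-\|\ev_t(a)\|\,\bigr|\le\|x-a\|$ \emph{uniformly} in $t$; thus $t\mapsto\|\ev_t(x)\|$ converges uniformly on $T$ to $t\mapsto\|\ev_t(a)\|$ as $x\to a$ in $\Aa$. Passing to suprema and using $\|x\|=\sup_t\|\ev_t(x)\|$ on the dense set $\Xx$ gives $\|a\|=\sup_t\|\ev_t(a)\|$, so $\Aa$ is a bundle of $C^*$-algebras. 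For the final claim, if in addition $t\mapsto\|\ev_t(x)\|$ lies in $C_0(T)$ for each $x\in\Xx$, then the function $t\mapsto\|\ev_t(a)\|$ is a uniform limit of functions in $C_0(T)$, and $C_0(T)$ is closed under uniform limits, so $t\mapsto\|\ev_t(a)\|\in C_0(T)$. Hence \eqref{eq:normfunction} holds and $\Aa$ is a continuous field of $C^*$-algebras.

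I do not anticipate a genuine obstacle: the argument is a chain of standard verifications. The only point needing care is the interchange of the limit $x\to a$ with $\sup_{t\in T}$, which is exactly what the uniformity coming from contractivity of the evaluation maps provides; the remainder is bookkeeping with the bundle axioms.
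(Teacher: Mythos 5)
Your proof is correct and follows essentially the same route as the paper's (much terser) argument: complete $\Xx$ in the sup-norm, extend the evaluations and the $C_0(T)$-action by continuity, and use that images of $C^*$-homomorphisms are closed to get surjectivity from density. The details you add — the fibrewise verification of the $C^*$-identity, and the uniform estimate $\bigl|\,\lVert\ev_t(x)\rVert-\lVert\ev_t(a)\rVert\,\bigr|\le\lVert x-a\rVert$ used to pass the norm identity and the $C_0(T)$-membership of the norm functions from $\Xx$ to its completion — are exactly the routine verifications the paper leaves implicit, and they are carried out correctly.
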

\begin{proof}
The completion is well-defined since the $C^*$-norm is finite by {\it (iv)}. The homomorphisms $\ev_t$ and the $C_0(T)$-action extend continuously to $\Aa$ by definition of the norm \eqref{eq:sup_norm}. Due to {\it (ii)} the extension of $\ev_t$ is surjective, since the images of $C^*$-algebras are closed.
\end{proof}
For a field of groupoids $(\Gg, T, p)$ with $\Gg, T$ lcsc Hausdorff spaces we use the $*$-algebra $\Xx=C_c(\Gg,\omega)$ which can be considered to be a subset of $\prod_{t\in T} C_c(\Gg_t, \omega\rvert_{\Gg_t})$. The restriction maps $\ev_t: C_c(\Gg,\omega)\to C_r^*(\Gg_t,\omega\rvert_{\Gg_t})$ have dense range since every element of $C_c(\Gg_t)$ has a pre-image in $C_c(\Gg)$ (the conditions of the Tietze extension theorem are satisfied as $\Gg$ is lcsc). Since all Haar systems are induced by the counting measure, $\ev_t$ is also a $*$-homomorphism. The completion of this $\Xx$ is $\Aa=C^*_r(\Gg, \omega)$ since \eqref{eq:sup_norm} is exactly the reduced $C^*$-norm. As in \cite{LandsmanContMath2001, Beckus18} the crucial point to verify is therefore merely that the norm function \eqref{eq:norm_function} is continuous.

Lower semi-continuity holds in large generality for reduced groupoid algebras:

\begin{proposition}
\label{prop:lower_semi-cont}
Let $\Gg$ be a lcsc Hausdorff \'etale groupoid with continuous twist $\omega$ which is a continuous field over a locally compact Hausdorff space $T$. For any $a\in C_c(\Gg,\omega)$ the function
$$t \in T\mapsto \lVert \ev_t(a)\rVert_{C^*_r(\Gg_t,\omega\rvert_{\Gg_t})}$$ is lower semi-continuous.
\end{proposition}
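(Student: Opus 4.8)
The plan is to reduce the claim to a lower semi-continuity statement for the regular representations over the unit space, and then to prove the latter by transporting finitely supported vectors between source fibres along open bisections.

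First I would pass from $T$ to $\Gg^{(0)}$. If $u\in\Gg^{(0)}$ with $p(u)=t$, then the source fibre of $u$ inside $\Gg_t$ is all of $\Gg_u$, since $p\circ s=p$ forces every $\gamma$ with $s(\gamma)=u$ into $\Gg_t$; moreover on $\ell^2(\Gg_u)$ the operator $\pi^{\omega\rvert_{\Gg_t}}_u(\ev_t(a))$ is given by the same formula as $\pi^\omega_u(a)$. Hence
\[
\lVert\ev_t(a)\rVert_{C^*_r(\Gg_t,\omega\rvert_{\Gg_t})}=\sup_{u\in\Gg^{(0)},\,p(u)=t}\lVert\pi^\omega_u(a)\rVert .
\]
Since $p$ is continuous, open and surjective, it then suffices to show that $u\mapsto\lVert\pi^\omega_u(a)\rVert$ is lower semi-continuous on $\Gg^{(0)}$: for any lower semi-continuous $\phi$ on $\Gg^{(0)}$ the set $\{t:\sup_{p(u)=t}\phi(u)>c\}$ is exactly $p(\{\phi>c\})$, which is open because $p$ is an open map.

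To prove lower semi-continuity of $u\mapsto\lVert\pi^\omega_u(a)\rVert$ at $u_0$, fix $\varepsilon>0$ and choose a unit vector $\xi_0\in\ell^2(\Gg_{u_0})$ with finite support $F_0$ such that $\lVert\pi^\omega_{u_0}(a)\xi_0\rVert>\lVert\pi^\omega_{u_0}(a)\rVert-\varepsilon$. As $\Gg$ is étale and $a$ is compactly supported, $\pi^\omega_{u_0}(a)\xi_0$ is again finitely supported, on some $F_1\subset\Gg_{u_0}$, and every element of $F:=F_0\cup F_1$ has source $u_0$. Using that $\Gg$ is Hausdorff I would choose pairwise disjoint open bisections $U_\gamma\ni\gamma$, $\gamma\in F$; on the open neighbourhood $W:=\bigcap_{\gamma\in F}s(U_\gamma)$ of $u_0$ put $\gamma^{(u)}:=(s\rvert_{U_\gamma})^{-1}(u)\in\Gg_u$, which depends continuously on $u\in W$, satisfies $\gamma^{(u_0)}=\gamma$, and the $\gamma^{(u)}$ stay pairwise distinct. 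Let $\xi_0^{(u)}\in\ell^2(\Gg_u)$ be the unit vector obtained by transporting $\xi_0$ via $\eta\mapsto\eta^{(u)}$.

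The concluding step is then purely a continuity check: for $\gamma\in F_1$, $\eta\in F_0$ the pairs $(\gamma^{(u)},(\eta^{(u)})^{-1})$ and $(\gamma^{(u)}(\eta^{(u)})^{-1},\eta^{(u)})$ are composable throughout $W$, so by continuity of multiplication, inversion, $a$ and $\omega$ one has $\omega(\gamma^{(u)}(\eta^{(u)})^{-1},\eta^{(u)})\,a(\gamma^{(u)}(\eta^{(u)})^{-1})\to\omega(\gamma\eta^{-1},\eta)\,a(\gamma\eta^{-1})$ as $u\to u_0$; summing over $F_0$ gives $(\pi^\omega_u(a)\xi_0^{(u)})(\gamma^{(u)})\to(\pi^\omega_{u_0}(a)\xi_0)(\gamma)$, and since $\pi^\omega_{u_0}(a)\xi_0$ is supported in $F_1$ one concludes $\liminf_{u\to u_0}\lVert\pi^\omega_u(a)\rVert\ge\liminf_{u\to u_0}\lVert\pi^\omega_u(a)\xi_0^{(u)}\rVert\ge\lVert\pi^\omega_{u_0}(a)\xi_0\rVert>\lVert\pi^\omega_{u_0}(a)\rVert-\varepsilon$, and $\varepsilon\to0$ finishes the proof. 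I expect the only delicate part to be the bookkeeping in this last step: one must check that the transported configuration genuinely lies in $\Gg_u$ with the correct source/range incidences, and that no spurious matrix entries appear in the limit — values of $a$ at points converging into the open complement of $\mathrm{supp}\,a$ eventually vanish, so exactly the entries already present at $u_0$ survive. The reduction in the second paragraph and the finite-support claim are routine.
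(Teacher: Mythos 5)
Your proposal is correct, and it takes a genuinely different route from the paper: the paper's ``proof'' of Proposition~\ref{prop:lower_semi-cont} is a one-line citation to \cite[Theorem 5.5]{LandsmanContMath2001} (with a remark that the argument survives a continuous twist, cf.\ \cite{Beckus18}), whereas you give a self-contained argument tailored to the \'etale setting. The cited proofs work for general locally compact groupoids with Haar systems and obtain lower semi-continuity by exhibiting enough continuous sections of the field of Hilbert spaces $u\mapsto L^2(\Gg_u,\lambda_u)$ along which vector states vary continuously; your transport of finitely supported vectors along disjoint open bisections is precisely the \'etale incarnation of that idea, with the advantage that everything is explicit (discrete fibres, finitely many matrix entries, termwise continuity of $a$ and $\omega$ over the finite index sets $F_0,F_1$). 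The reduction from $T$ to $\Gg^{(0)}$ via $\{t:\sup_{p(u)=t}\phi(u)>c\}=p(\{\phi>c\})$ is also sound; the only point worth making explicit is why $p\rvert_{\Gg^{(0)}}$ is open --- either because $\Gg^{(0)}$ is open in $\Gg$ for an \'etale groupoid, or because $p(V)=p(s^{-1}(V))$ using $p=p\circ s$ and openness of $s$. The remaining small housekeeping items (finiteness of $F_1$ from compactness of $\mathrm{supp}(a)$ intersected with the discrete source fibre; $\lVert\xi_0^{(u)}\rVert=1$ from disjointness of the bisections; passing from nets to sequences using second countability) all check out, and as you note, for a \emph{lower} bound any extra matrix entries appearing at nearby $u$ can only help, so the ``spurious entries'' concern is moot. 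Both approaches deliver the same statement; yours buys self-containedness and transparency in the \'etale case at the cost of not covering general Haar systems, which the citation does.
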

\begin{proof}
This is proven in \cite[Theorem 5.5]{LandsmanContMath2001} for a trivial twist and the argument applies just as well for a continuous cocycle $\omega$ (see also \cite{Beckus18} for more details).
\end{proof}

The main problem is therefore proving upper semi-continuity. For this, we follow a strategy vaguely inspired by an argument in \cite{Bellissard94} (which directly proves spectral continuity for certain twisted crossed product algebras) and reduce the upper semi-continuity to an easier to prove special case:
\begin{definition}
\label{def:cont_at_zero}
Let $\Xx$ be a set of sections of $(\Aa_t)_{t\in T}$ satisfying (i-iv)  of Proposition~\ref{prop:fields_by_sections} and let $\Aa$ be the $C^*$-algebra generated by $\Xx$.

We say that $a\in \Aa$ is upper semi-continuous at zero if for every $t_0 \in T$ such that $$\ev_{t_0}(a)=0,$$
one has $\lim_{t\to t_0} \lVert \ev_t(a)\rVert = 0$.
\end{definition}
We will then want to extrapolate from that property to full upper semi-continuity using functional calculus. The difficulty is that functional calculus  of self-adjoint elements of $\Xx$ will generally leave that subalgebra. 
\begin{proposition}
\label{prop:cont_approx_smooth}
Let $\Xx$ be a set of sections of $(\Aa_t)_{t\in T}$ satisfying (i-iv)  of Proposition~\ref{prop:fields_by_sections} and let $\Aa$ be the $C^*$-algebra generated by $\Xx$. Assume that
	\begin{enumerate}
	\item[(i)] all elements of $\Xx$ are upper semi-continuous at zero.
    \item[(ii)] $\Xx$ is closed under the smooth functional calculus of $\Aa$, i.e. $h=h^* \in \Xx$ implies $\varphi(h)\in \Xx$ for all smooth functions $\varphi\in C_c^\infty(\RM)$.
	\end{enumerate}
	For all self-adjoint $h\in \Xx$ the function $t\in T \mapsto \lVert \ev_t(h)\rVert$ is then upper semi-continuous.
\end{proposition}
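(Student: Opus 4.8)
The plan is to prove upper semi-continuity of $t\mapsto\lVert\ev_t(h)\rVert$ at an arbitrary point $t_0\in T$ by comparing $h$ with a compactly supported smooth function of $h$ that vanishes at $t_0$ but that detects any growth of $\lVert\ev_t(h)\rVert$ in a neighborhood of $t_0$. I would fix $h=h^*\in\Xx$ and $t_0\in T$, set $M=\lVert\ev_{t_0}(h)\rVert$, and first record that by assumption (iv) of Proposition~\ref{prop:fields_by_sections} the number $R:=\sup_{t\in T}\lVert\ev_t(h)\rVert$ is finite (and clearly $R\ge M$), so that $\sigma(\ev_t(h))\subset[-R,R]$ for every $t$ since $\ev_t(h)$ is self-adjoint. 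The goal is then to show that for each $\epsilon\in(0,1)$ there is a neighborhood of $t_0$ on which $\lVert\ev_t(h)\rVert<M+\epsilon$.

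Next I would choose $\varphi\in C_c^\infty(\RM)$ with $\varphi\equiv 0$ on $[-M-\tfrac\epsilon2,\,M+\tfrac\epsilon2]$, with $\varphi\equiv 1$ on the compact set $K_\epsilon:=\{x\in\RM:M+\epsilon\le\lvert x\rvert\le R\}$ (possibly empty), and with $\operatorname{supp}\varphi$ bounded; such a function exists because $K_\epsilon$ is disjoint from the interval on which $\varphi$ is prescribed to vanish. By hypothesis (ii) we have $\varphi(h)\in\Xx$, and since each $\ev_t$ is a $*$-homomorphism it commutes with the continuous functional calculus, so $\ev_t(\varphi(h))=\varphi(\ev_t(h))$ for all $t$. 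Because $\sigma(\ev_{t_0}(h))\subset[-M,M]$ and $\varphi$ vanishes there, $\ev_{t_0}(\varphi(h))=\varphi(\ev_{t_0}(h))=0$. Hypothesis (i), applied to $\varphi(h)\in\Xx$, then gives $\lim_{t\to t_0}\lVert\ev_t(\varphi(h))\rVert=0$, so there is a neighborhood $V$ of $t_0$ with $\lVert\ev_t(\varphi(h))\rVert<1$ for all $t\in V$.

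To close the argument I would argue by contraposition: if $t\in V$ but $\lVert\ev_t(h)\rVert\ge M+\epsilon$, then, $\ev_t(h)$ being self-adjoint, its norm equals its spectral radius and is attained, so some $\lambda\in\sigma(\ev_t(h))$ satisfies $M+\epsilon\le\lvert\lambda\rvert\le R$, i.e. $\lambda\in K_\epsilon$, hence $\varphi(\lambda)=1$ and $\lVert\ev_t(\varphi(h))\rVert=\max_{\mu\in\sigma(\ev_t(h))}\lvert\varphi(\mu)\rvert\ge 1$, contradicting $t\in V$. Thus $\lVert\ev_t(h)\rVert<M+\epsilon$ for all $t\in V$, and since $\epsilon\in(0,1)$ was arbitrary this gives $\limsup_{t\to t_0}\lVert\ev_t(h)\rVert\le\lVert\ev_{t_0}(h)\rVert$, which is the desired upper semi-continuity at $t_0$.

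I expect the only real subtlety to be the tension between the compact support demanded by the smooth functional calculus in hypothesis (ii) and the need for a single $\varphi$ to certify $\lVert\ev_t(h)\rVert$ simultaneously for all $t$ near $t_0$; this is precisely where the uniform bound in (iv) of Proposition~\ref{prop:fields_by_sections} is used, to confine every relevant spectrum to the fixed interval $[-R,R]$. The remaining ingredients --- that $*$-homomorphisms commute with the continuous functional calculus, and that for a self-adjoint element the norm equals the largest modulus attained on its spectrum --- are routine.
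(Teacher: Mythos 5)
Your proof is correct and uses essentially the same mechanism as the paper: a compactly supported smooth bump function $\varphi$ that vanishes on a neighborhood of $\sigma(\ev_{t_0}(h))$ but equals $1$ wherever the spectrum would have to reach if the norm exceeded $\lVert\ev_{t_0}(h)\rVert+\epsilon$, combined with hypothesis (i) applied to $\varphi(h)\in\Xx$. The only cosmetic difference is that you argue directly with an $\epsilon$-neighborhood and a bump equal to $1$ on the full annulus $M+\epsilon\le\lvert x\rvert\le R$ (using (iv) to bound $R$), whereas the paper runs a sequential contradiction and localizes the bump near the limiting value $\pm C$; both are sound.
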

\begin{proof}
Assume for a contradiction that upper semi-continuity does not hold for $h\in \Xx$ at some $t\in T$. That means there exists a sequence $(t_n)_{n\in \NM}$ converging to $t$ such that 
$$C := \lim_{n\to \infty} \lVert \ev_{t_n}(h)\rVert  > \lVert \ev_{t}(h)\rVert + \delta$$
for some $C>\delta>0$. Since the operator norm is given by the spectral radius,  the spectrum of each $\ev_{t_n}(h)$ eventually has non-trivial intersection with the open set $U=B_{\delta/3}(C)\cup B_{\delta/3}(-C)\subset \RM$. Choose a smooth function $\varphi\in C^\infty_c(\RM,\RM)$ equal to $1$ on $U$, but vanishing in $[-C+\frac{2\delta}{3},C-\frac{2\delta}{3}]$. By the spectral mapping theorem one has $\norm{\varphi(\ev_{t_n}(h))}\geq 1$, but $\varphi(\ev_{t}(h))=0$. 

On the other hand, $\varphi(\ev_{t_n}(h))=\ev_{t_n}(\varphi(h))$ since continuous functional calculus commutes with homomorphisms of $C^*$-algebras. Due to $\varphi(h)\in \Xx$ property (i) implies $\lim_{n\to\infty} \lVert \varphi(\ev_{t_n}(h)) \rVert = 0$, which is a contradiction.
\end{proof}

\begin{remark}
For purposes of K-theory one commonly uses dense spectral invariant subalgebras which are closed under holomorphic functional calculus \cite{Bla}. That appears to be not sufficient in the present case, however, instead of smooth functions it would also be enough if $\Xx$ was closed under a different class of functions which has compactly supported bump functions, for example the Gevrey classes. 
\end{remark}
For \'etale groupoids upper semi-continuity at zero holds for all compactly supported functions:
\begin{proposition}
\label{prop:cont_at_zero}
Let $\Gg$ be a lcsc Hausdorff \'etale groupoid which is a continuous field over a locally compact Hausdorff space $T$, then any $f\in \Xx= C_c(\Gg)$ is upper semi-continuous at zero.
\end{proposition}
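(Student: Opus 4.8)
The plan is to bound the reduced fibre norm by the $I$-norm, $\norm{\ev_t(a)}_{C^*_r(\Gg_t,\omega\rvert_{\Gg_t})}\leq \norm{\ev_t(a)}_{I}$ as recorded in Section~\ref{Sec:Definitions}, and then to use that, for an \'etale groupoid, the $I$-norm of a bisection-supported function is dominated by its supremum norm. Fix $t_0\in T$ with $\ev_{t_0}(f)=0$. Since the inclusion $C_c(\Gg_{t_0})\hookrightarrow C^*_r(\Gg_{t_0},\omega\rvert_{\Gg_{t_0}})$ is injective (via the faithful conditional expectation onto $C_0(\Gg_{t_0}^{(0)})$), this means that $f$ vanishes identically on the closed subgroupoid $\Gg_{t_0}$. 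Because $p\circ s = p = p\circ r$, for any unit $u\in\Gg_t^{(0)}$ the fibres $\Gg_u=s^{-1}(u)$ and $\Gg^u=r^{-1}(u)$ are contained in $\Gg_t$, so $\ev_t(f)=f\rvert_{\Gg_t}$ and its $I$-norms are computed by summing $\abs{f}$ over these fibres.

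Next I would cover the compact set $\mathrm{supp}(f)$ by finitely many open bisections $U_1,\dots,U_N$ — possible since, $\Gg$ being \'etale, the open bisections form a basis of the topology — and pick a subordinate partition of unity $\chi_1,\dots,\chi_N$, so that $f=\sum_{i=1}^N f_i$ with $f_i:=\chi_i f\in C_c(\Gg)$ supported in $U_i$; each $f_i$ still vanishes on $\Gg_{t_0}$. Since $s$ and $r$ restrict to injections on each $U_i$, for every unit $u$ the sums $\sum_{\gamma\in\Gg_u}\abs{f_i(\gamma)}$ and $\sum_{\gamma\in\Gg^u}\abs{f_i(\gamma)}$ have at most one nonzero term, whence $\norm{\ev_t(f_i)}_I\leq\norm{f_i\rvert_{\Gg_t}}_\infty$. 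By subadditivity of $\norm{\cdot}_I$,
\begin{equation*}
\norm{\ev_t(f)}_{C^*_r(\Gg_t,\omega\rvert_{\Gg_t})}\;\leq\;\norm{\ev_t(f)}_I\;\leq\;\sum_{i=1}^N\norm{f_i\rvert_{\Gg_t}}_\infty ,
\end{equation*}
so it suffices to show that $\norm{f_i\rvert_{\Gg_t}}_\infty\to 0$ as $t\to t_0$ for each $i$.

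This final point is a soft compactness argument that works for any $g\in C_c(\Gg)$ vanishing on $\Gg_{t_0}$. Given $\varepsilon>0$, the set $L_\varepsilon:=\mathrm{supp}(g)\cap\{\,\abs{g}\geq\varepsilon\,\}$ is compact and, since $g$ vanishes on $\Gg_{t_0}$, disjoint from $\Gg_{t_0}$; hence $p(L_\varepsilon)$ is a compact — therefore closed, as $T$ is Hausdorff — subset of $T$ not containing $t_0$, and $O_\varepsilon:=T\setminus p(L_\varepsilon)$ is an open neighbourhood of $t_0$. For $t\in O_\varepsilon$ one has $\Gg_t\cap L_\varepsilon=\emptyset$, so $\abs{g}<\varepsilon$ everywhere on $\Gg_t$ and thus $\norm{g\rvert_{\Gg_t}}_\infty\leq\varepsilon$. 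Applying this to $g=f_i$ for the finitely many indices $i$ and summing completes the argument.

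The one genuinely essential ingredient — and the reason the statement is restricted to \'etale groupoids — is the first step: the passage to the $I$-norm together with the bisection decomposition. For a non-\'etale groupoid the reduced fibre norm is not dominated by an $\ell^1$-type quantity of the coefficient function, which is exactly why in that generality one has to invoke amenability (Theorem~\ref{th:amenable}) or the rapid decay property instead. The remaining ingredients — the partition-of-unity bookkeeping and the compactness argument for the vanishing of the fibrewise supremum norm — are routine.
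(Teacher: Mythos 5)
Your argument is correct and follows essentially the same route as the paper's proof: dominate the reduced fibre norm by the $I$-norm, use a finite cover of $\mathrm{supp}(f)$ by open bisections so that each fibre sum has boundedly many nonzero terms, and then use compactness of the set where $\abs{f}$ is not small together with the openness of $T\setminus p(\cdot)$ to find the required neighbourhood of $t_0$. The partition of unity and the superlevel-set formulation are cosmetic variants of the paper's bookkeeping (which instead bounds at most $N$ terms by $\epsilon N^{-1}$ on an open neighbourhood $V$ of $\Gg_{t_0}\cap\mathrm{supp}(f)$ and treats the range fibres via $f^*$).
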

\begin{proof}
Assume $\ev_{t_0}(f)=0$ for some $t_0\in T$. Since $f$ is compactly supported and $\Gg$ \'etale there is a cover of $\mathrm{supp}(f)$ by a finite number of open bisections $U_1,...,U_N \subset \Gg$. In the sum on the right-hand side of $$\lVert \ev_t(f)\rVert_{I,s} = \sup_{u\in \Gg_t^{(0)}} \sum_{\gamma\in (\Gg_t)_u} \lvert f(\gamma)\rvert$$ there are therefore at most $N$ non-vanishing terms for any given $u$. Due to compactness of the support and $\ev_{t_0}(f)=0$ there is for each $\epsilon>0$ an open neighborhood $V$ of $\Gg_{t_0} \cap \mathrm{supp}(f)$ on which $\lvert f(\gamma) \rvert<\epsilon N^{-1}$ for all $\gamma \in V$. Note that $p(\mathrm{supp}(f) \setminus V)$ is a compact subset of $T$ which does not contain $t_0$, hence  there exists a neighborhood $S\subset T$ of $t_0$ such that $V$ contains all sets $\Gg_{t} \cap \mathrm{supp}(f)$ for $t\in S$. We conclude
$$\lVert \ev_t(f)\rVert_{I,s}\leq \sup_{u\in \Gg_t^{(0)}} \sum_{\gamma\in(\Gg_t)_u} \epsilon N^{-1} < \epsilon$$
for all $t\in S$. This shows convergence of the semi-norm $\lVert \cdot \rVert_{I,s}$; applying the same argument to $f^*$ shows convergence in the semi-norm $\lVert \cdot \rVert_{I,r}$ and thus also the operator norm.
\end{proof}
We can relax the assumption on compact support:
\begin{definition}
\label{def:zeropres}
We say that $f\in C^*_r(\Gg,\omega)$ has a zero-preserving approximation if there exists a sequence $(f_n)_{n\in \NM}$ in $C_c(\Gg)$ such that
\begin{enumerate}
	\item[(i)] $\lim_{n\to\infty} f_n = f$ in operator norm.
	\item[(ii)] if $\ev_t(f)=0$ for any $t\in T$ then $\ev_t(f_n)=0$ for all $n\in \NM$.
\end{enumerate}
\end{definition}
The purpose of this definition lies in the extension of Proposition~\ref{prop:cont_at_zero}:
\begin{proposition}
\label{prop:cont_at_zero_approx}
Let $\Gg$ be as in Proposition~\ref{prop:cont_at_zero}. If $f\in C^*_r(\Gg,\omega)$ has a zero-preserving approximation then it is upper semi-continuous at zero.
\end{proposition}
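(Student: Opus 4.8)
The plan is to bootstrap from Proposition~\ref{prop:cont_at_zero}, which already establishes upper semi-continuity at zero for compactly supported sections, using the operator-norm approximation supplied by Definition~\ref{def:zeropres}. Fix $t_0\in T$ with $\ev_{t_0}(f)=0$ and let $\epsilon>0$. First I would invoke condition (i) of Definition~\ref{def:zeropres} to choose $n\in\NM$ with $\lVert f-f_n\rVert_{C^*_r(\Gg,\omega)}<\epsilon/2$. Condition (ii) then applies at $t_0$, since $\ev_{t_0}(f)=0$, and yields $\ev_{t_0}(f_n)=0$.

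Next I would apply Proposition~\ref{prop:cont_at_zero} to the single function $f_n\in C_c(\Gg)$: it is upper semi-continuous at zero, so from $\ev_{t_0}(f_n)=0$ one gets a neighborhood $S\subset T$ of $t_0$ with $\lVert\ev_t(f_n)\rVert<\epsilon/2$ for all $t\in S$. One point that deserves an explicit remark here is that Proposition~\ref{prop:cont_at_zero} was phrased for the untwisted algebra, but its proof only used the estimate $\lVert\cdot\rVert_{C^*_r}\leq\lVert\cdot\rVert_I$ together with the $I$-norms $\lVert\cdot\rVert_{I,s},\lVert\cdot\rVert_{I,r}$, none of which depends on $\omega$; hence the same conclusion holds verbatim with $\Xx=C_c(\Gg,\omega)$ and $\Aa=C^*_r(\Gg,\omega)$.

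Finally, for $t\in S$ I would estimate, using that $\lVert\ev_t(a)\rVert\leq\lVert a\rVert_{C^*_r(\Gg,\omega)}=\sup_{s\in T}\lVert\ev_s(a)\rVert$ (equivalently, that $\ev_t$ is a contractive $*$-homomorphism),
\[
\lVert\ev_t(f)\rVert\leq\lVert\ev_t(f-f_n)\rVert+\lVert\ev_t(f_n)\rVert\leq\lVert f-f_n\rVert_{C^*_r(\Gg,\omega)}+\tfrac{\epsilon}{2}<\epsilon,
\]
so $\lim_{t\to t_0}\lVert\ev_t(f)\rVert=0$, which is exactly upper semi-continuity of $f$ at zero. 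This is a routine $\epsilon/2$ approximation argument, so I do not expect a serious obstacle; the only thing requiring care is the observation in the previous paragraph that the twist $\omega$ plays no role in Proposition~\ref{prop:cont_at_zero}.
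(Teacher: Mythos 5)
Your argument is correct and is essentially identical to the paper's proof: both choose $n$ with $\lVert f-f_n\rVert<\epsilon/2$, use the zero-preserving property to get $\ev_{t_0}(f_n)=0$, apply Proposition~\ref{prop:cont_at_zero} to $f_n$, and conclude by the triangle inequality. Your added remark that the twist $\omega$ is irrelevant to Proposition~\ref{prop:cont_at_zero} (since the bound goes through the twist-independent $I$-norms) is a sensible clarification the paper leaves implicit.
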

\begin{proof}
If $\ev_{t_0}(f)=0$ for some $t_0\in T$ then we can for any $\epsilon>0$ choose first $n$ large enough and then a neighborhood of $t_0$ small enough such that
$$\lVert \ev_t(f)\rVert \leq \lVert \ev_t(f_n)\rVert + \frac{\epsilon}{2} \leq \frac{\epsilon}{2} + \frac{\epsilon}{2}$$
by applying Proposition~\ref{prop:cont_at_zero} to an approximating sequence $(f_n)_{n\in \NM}$.
\end{proof}

Proving upper semi-continuity of the norm therefore reduces to finding a dense subalgebra of $C^*_r(\Gg,\omega)$ that is closed under smooth functional calculus and each element of which admits a zero-preserving approximation.

\begin{remark}
Dropping the structure of a continuous field one can more generally discuss which $f\in C^*_r(\Gg,\omega)$ have an approximation by $f_n \in C_c(\Gg,\omega)$ with $\mathrm{supp}(f_n)\subset \mathrm{supp}(f)$. In some cases every $f\in C^*_r(\Gg,\omega)$ admits an approximation of that form, namely when $\Gg$ is amenable \cite{BrownEtAl2021,BrownEtAl2024} or if $\Gg$ satisfies the rapid decay property with respect to a negative-definite length function \cite{FullerKarmakar}. In that case Proposition~\ref{prop:fields_by_sections} applies immediately with $\Aa=\Xx$, which recovers in particular the special case Theorem~\ref{th:amenable}.
\end{remark}

\section{The rapid decay property}
\label{Sec:RapidDecay}

The rapid decay condition was first considered by Haagerup \cite{Haagerup79} in the context of free groups and formalized by Jolissaint \cite{Jolissaint} for more general reduced group $C^*$-algebras. Relevant for us is its generalization to (twisted) groupoids introduced in \cite{Hou,Weygandt}.

\begin{definition}
A length function $l: \Gg \to \RM_+$ is a continuous function such that $l(\gamma \gamma') \leq l(\gamma)+l(\gamma')$ for all $(\gamma,\gamma')\in \Gg^{(2)}$, $l(\gamma)=l(\gamma^{-1})$ for all $\gamma \in \Gg$ and $l(u)=0$ for all units $u\in \Gg^{(0)}$.
\end{definition}

Fixing a length function one can define weighted $L^2$-norms on $C_c(\Gg)$
\begin{align*}
\lVert f\rVert_{2,s,q} &:= \sup_{u\in \Gg^{(0)}} \left( \sum_{\gamma \in \Gg_u} \lvert f(\gamma)\rvert^2 (1+l(\gamma))^{2q}\right)^{1/2},\\
\lVert f\rVert_{2,r,q} &:= \sup_{u\in \Gg^{(0)}} \left( \sum_{\gamma \in \Gg^u} \lvert f(\gamma)\rvert^2 (1+l(\gamma))^{2q}\right)^{1/2},\\
\lVert f\rVert_{2,q} &:= \max(\lVert f\rVert_{2,s,q},\lVert f\rVert_{2,r,q}).
\end{align*}
The completion of $C_c(\Gg)$ w.r.t. to $\lVert \cdot \rVert_{2,p}$ is the Banach space $H^p(\Gg)$, the completion w.r.t. to the family of norms $(\lVert \cdot \rVert_{2,p})_{p\in \NM}$ the Fr\'echet space $H^\infty(\Gg)$.

\begin{definition}
One says that $\Gg$ has the $\omega$-twisted rapid decay property if there exist constants $C>0$, $p\geq 0$ such that
$$\lVert f \rVert_{C_r^*(\Gg, \omega)}\leq C \lVert f\rVert_{2,p}$$
for all $f\in C_c(\Gg)$ under the natural inclusion $C_c(\Gg) \subset C_r^*(\Gg, \omega)$.
\end{definition}
If $\Gg$ has the $\omega$-twisted rapid decay property then $H^\infty(\Gg)$ can be identified with a dense subset of $C_r^*(\Gg, \omega)$, as one always has $\norm{f}_{2,s,0}\leq \norm{f}_{C^*_r(\Gg,\omega)}$ for \'etale groupoids \cite[Proposition II.4.1]{RenaultBook}, showing that the extension of the inclusion to $H^\infty(\Gg)$ is injective. The convolution multiplication of $C_c(\Gg,\omega)$ is jointly continuous w.r.t. the semi-norms, indeed, adapting \cite[Lemma 1.2.4]{Jolissaint} to the case of groupoid algebras one has
$$\lVert f_1 * f_2\rVert_{2,q}\leq C \lVert f_1\rVert_{2,p+q} \, \lVert f_2\rVert_{2,q}$$
with the constants $C$, $p$ from the rapid decay property. Hence one obtains a Fr\'echet algebra that we denote $H^\infty(\Gg,\omega)$. 

\begin{proposition}[{\cite{Weygandt,Chatterji}}]
\label{prop:twisted_rapid_decay}
If the groupoid $\Gg$ satisfies the rapid decay property for trivial twist then it also satisfies the $\omega$-twisted rapid decay property for any continuous twist and with the same constants $C$ and $s$.
\end{proposition}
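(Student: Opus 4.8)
The plan is to avoid comparing the $C^*$-algebras $C_r^*(\Gg,\omega)$ and $C_r^*(\Gg)$ as algebras --- for a non-trivial cocycle they genuinely differ --- and instead to compare only the reduced norms of a fixed element $f\in C_c(\Gg)$, exploiting that the weighted $\ell^2$-norms $\lVert\cdot\rVert_{2,p}$ are insensitive to the twist since they depend on $f$ only through its pointwise modulus. In particular, for $f\in C_c(\Gg)$ one has $\lvert f\rvert\in C_c(\Gg)$ and $\lVert f\rVert_{2,p}=\lVert\,\lvert f\rvert\,\rVert_{2,p}$ for every $p\geq 0$, straight from the definition of the semi-norms.

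The substantive step is to dominate the twisted regular representation by the untwisted regular representation of $\lvert f\rvert$. Fix $u\in\Gg^{(0)}$ and $\xi\in\ell^2(\Gg_u)$, write $\pi^{1}_u$ for the untwisted ($\omega\equiv 1$) regular representation, and set $\lvert\xi\rvert(\eta):=\lvert\xi(\eta)\rvert$. Since $\lvert\omega\rvert\equiv 1$, the defining formula for $\pi^\omega_u(f)$ together with the triangle inequality gives, for every $\gamma\in\Gg_u$,
\[
\bigl\lvert(\pi^\omega_u(f)\xi)(\gamma)\bigr\rvert\;\leq\;\sum_{\eta\in\Gg_u}\lvert f(\gamma\eta^{-1})\rvert\,\lvert\xi(\eta)\rvert\;=\;\bigl(\pi^{1}_u(\lvert f\rvert)\,\lvert\xi\rvert\bigr)(\gamma).
\]
As $\pi^{1}_u(\lvert f\rvert)$ has a nonnegative kernel, squaring and summing over $\gamma$ yields $\lVert\pi^\omega_u(f)\xi\rVert_2\leq\lVert\pi^{1}_u(\lvert f\rvert)\,\lvert\xi\rvert\rVert_2\leq\lVert\pi^{1}_u(\lvert f\rvert)\rVert\,\lVert\xi\rVert_2$, hence $\lVert\pi^\omega_u(f)\rVert\leq\lVert\pi^{1}_u(\lvert f\rvert)\rVert$; taking the supremum over $u$ produces the key inequality $\lVert f\rVert_{C_r^*(\Gg,\omega)}\leq\lVert\,\lvert f\rvert\,\rVert_{C_r^*(\Gg)}$. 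Applying the untwisted rapid decay hypothesis to $\lvert f\rvert\in C_c(\Gg)$ bounds the right-hand side by $C\,\lVert\,\lvert f\rvert\,\rVert_{2,p}=C\,\lVert f\rVert_{2,p}$, which is exactly the $\omega$-twisted rapid decay inequality for $f$, with the same constants $C$ and $p$. Since $f\in C_c(\Gg)$ was arbitrary, this proves the claim.

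I do not expect a genuine obstacle here. The only point meriting care is the passage in the middle step from matrix coefficients of equal modulus to an operator-norm bound; this is legitimate precisely because the dominating operator $\pi^{1}_u(\lvert f\rvert)$ is positivity-preserving, so the pointwise domination upgrades to a norm domination. Everything else is a routine unwinding of the definitions.
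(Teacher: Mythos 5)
Your proof is correct and follows essentially the same route as the paper's: dominate the twisted regular representation pointwise by the untwisted representation of $\lvert f\rvert$ via the triangle inequality, deduce $\lVert f\rVert_{C_r^*(\Gg,\omega)}\leq \lVert\,\lvert f\rvert\,\rVert_{C_r^*(\Gg)}$, and apply the untwisted rapid decay estimate to $\lvert f\rvert$, using that the weighted $\ell^2$-norms depend only on the pointwise modulus. The only difference is that you make explicit the positivity-preservation step that upgrades the pointwise domination to an operator-norm bound, which the paper leaves implicit.
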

\begin{proof}
Applying the triangle inequality to the induced representation one has
$$\lVert \pi_u^\omega(f) g\rVert_{\ell^2(\Gg_u)}\leq \lVert \pi_u^{1}(\lvert f\rvert)\, \lvert g\rvert \rVert_{\ell^2(\Gg_u)} \leq \lVert \,\lvert f\rvert\, \rVert_{C^*_r(\Gg)} \lVert g\rVert_{\ell^2(\Gg_u)}$$
and the analogous inequality for $\Gg^u$ where we distinguish between the $\omega$-twisted induced representation and the trivially twisted one. Here $\lvert f\rvert$ is the point-wise modulus. Applying the untwisted rapid decay property to the right-hand side bounds the operator norm of $\pi^\omega_u(f)$ and completes the proof.
\end{proof}

The main result which we will need is this:
\begin{theorem}
Let $\Gg$ be a lcsc Hausdorff \'etale groupoid with the $\omega$-twisted decay property and identify the Fr\'echet algebra $H^\infty(\Gg, \omega)$ with a dense subalgebra of $C_r^*(\Gg, \omega)$.
Then
\begin{enumerate}
	\item[(i)] the inclusion $H^\infty(\Gg, \omega)\to C_r^*(\Gg, \omega)$ is spectral invariant, i.e. $$\sigma_{H^\infty(\Gg, \omega)}(f)=\sigma_{C_r^*(\Gg,\omega)}(f)$$
	for all $f\in H^\infty(\Gg, \omega)$, equivalently if $f\in H^\infty(\Gg,\omega)$ is invertible in $C^*_r(\Gg,\omega)$ then $f^{-1}\in H^\infty(\Gg,\omega)$.
	\item[(ii)] $H^\infty(\Gg, \omega)$ is closed under the functional calculus of its self-adjoint elements, i.e. for each $f=f^* \in H^\infty(\Gg, \omega)$ and $\varphi\in C_c^\infty(\RM)$ the element $\varphi(f)\in C_r^*(\Gg, \omega)$ is in $H^\infty(\Gg, \omega)$.
\end{enumerate}
\end{theorem}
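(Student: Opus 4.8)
The plan is to reduce everything to one quantitative resolvent estimate. First, it is enough to treat self-adjoint elements: for (i), if $f\in H^\infty(\Gg,\omega)$ is invertible in $C_r^*(\Gg,\omega)$ then $f^*f\in H^\infty(\Gg,\omega)$ is positive and invertible and $f^{-1}=(f^*f)^{-1}f^*$, so it suffices that $(f^*f)^{-1}\in H^\infty(\Gg,\omega)$; and (ii) is already phrased for $h=h^*$. So fix $h=h^*\in H^\infty(\Gg,\omega)$; the key claim is that for every $z\in\CM\setminus\sigma_{C_r^*(\Gg,\omega)}(h)$ one has $(z-h)^{-1}\in H^\infty(\Gg,\omega)$ with
$$\lVert (z-h)^{-1}\rVert_{2,q}\ \le\ P_q\!\bigl(\mathrm{dist}(z,\sigma(h))^{-1}\bigr),\qquad q\in\NM,$$
for polynomials $P_q$ depending only on $q$, the rapid-decay constants, and $\lVert h\rVert_{2,p+q}$. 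Granting this, the containment $(z-h)^{-1}\in H^\infty(\Gg,\omega)$ for all $z\notin\sigma_{C_r^*(\Gg,\omega)}(h)$ gives $\sigma_{H^\infty(\Gg,\omega)}(h)=\sigma_{C_r^*(\Gg,\omega)}(h)$, which with the reduction above proves (i). For (ii) I would use the Helffer--Sjöstrand formula: pick an almost-analytic extension $\tilde\varphi\in C_c^\infty(\CM)$ of $\varphi$ with $\lvert\bar\partial\tilde\varphi(z)\rvert\le C_N\lvert\mathrm{Im}\,z\rvert^N$ for every $N$, so that $\varphi(h)=\tfrac1\pi\int_\CM\bar\partial\tilde\varphi(z)\,(z-h)^{-1}\,dx\,dy$ in $C_r^*(\Gg,\omega)$; choosing $N>\deg P_q$ makes the integrand absolutely integrable in the seminorm $\lVert\cdot\rVert_{2,q}$ over the compact support of $\tilde\varphi$, so the integral converges in the Fréchet algebra $H^\infty(\Gg,\omega)$, and since $H^\infty(\Gg,\omega)\hookrightarrow C_r^*(\Gg,\omega)$ is continuous and injective its value equals $\varphi(h)$, whence $\varphi(h)\in H^\infty(\Gg,\omega)$. (A Fourier substitute $\varphi(h)=\tfrac1{2\pi}\int\hat\varphi(\xi)e^{i\xi h}\,d\xi$ together with a polynomial-in-$\xi$ bound on $\lVert e^{i\xi h}\rVert_{2,q}$ works just as well.)

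To prove the key claim I would run a commutator calculus with the length function. On each $\ell^2(\Gg_u)$ let $D_u$ be the self-adjoint (unbounded) multiplication operator by $l$. Subadditivity of $l$ yields $\lvert l(\gamma)-l(\eta)\rvert\le l(\gamma\eta^{-1})$, so the integral kernel of $\mathrm{ad}_{D_u}^{k}\pi^\omega_u(f)$ on $\Gg_u\times\Gg_u$ is dominated in modulus by the kernel of the positive (untwisted) convolution operator attached to $l^{k}\lvert f\rvert$; hence by rapid decay
$$\lVert\mathrm{ad}_{D_u}^{k}\pi^\omega_u(f)\rVert\ \le\ C\,\lVert l^{k}\lvert f\rvert\rVert_{2,p}\ \le\ C\,\lVert f\rVert_{2,p+k},\qquad\text{uniformly in }u.$$
Conversely, since $D_u\delta_u=l(u)\delta_u=0$ on the unit vector $\delta_u\in\ell^2(\Gg_u)$ and $\pi^\omega_u(f)\delta_u$ equals $f\rvert_{\Gg_u}$ up to a unimodular phase, expanding $(1+D_u)^q$ and commuting every $D_u$ to the right (where it kills $\delta_u$) gives
$$\lVert f\rVert_{2,s,q}\ =\ \sup_u\bigl\lVert(1+D_u)^q\pi^\omega_u(f)\delta_u\bigr\rVert\ \le\ \sum_{k=0}^{q}\binom{q}{k}\,\sup_u\bigl\lVert\mathrm{ad}_{D_u}^{k}\pi^\omega_u(f)\bigr\rVert,$$
and $\lVert f\rVert_{2,r,q}=\lVert f^*\rVert_{2,s,q}$ is handled identically. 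Finally, with $A_u=\pi^\omega_u(h)$ and $R_u=(z-A_u)^{-1}$ (so $\lVert R_u\rVert\le\mathrm{dist}(z,\sigma(h))^{-1}$), the derivation identity $\mathrm{ad}_{D_u}(R_u)=R_u(\mathrm{ad}_{D_u}A_u)R_u$ and its Leibniz iterates write $\mathrm{ad}_{D_u}^{k}(R_u)$ as a bounded sum of products $R_u(\mathrm{ad}_{D_u}^{j_1}A_u)R_u\cdots(\mathrm{ad}_{D_u}^{j_m}A_u)R_u$ with $j_i\ge1$ and $j_1+\dots+j_m=k$; bounding each factor by the two displays above produces a polynomial $P_q$ in $\mathrm{dist}(z,\sigma(h))^{-1}$ of degree $q+1$, which is the claim.

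The main obstacle I expect is rigour in the calculus with the unbounded $D_u$: one must check that a bounded operator $X$ with bounded iterated commutators $\mathrm{ad}_{D_u}^{k}(X)$ for $k\le q$ preserves $\mathrm{dom}(D_u^{q})$ with $D_u^{q}X=\sum_k\binom{q}{k}\mathrm{ad}_{D_u}^{k}(X)D_u^{q-k}$ there (the standard ``$C^q$ for the one-parameter group $e^{itD_u}$'' fact), that this class contains $A_u=\pi^\omega_u(h)$ by the domination above together with finiteness of the seminorms $\lVert h\rVert_{2,p+k}$, that it is stable under inversion so that it contains $R_u$, and that the operator $\mathrm{ad}_{D_u}^{k}\pi^\omega_u((z-h)^{-1})$ so obtained is the one whose evaluation on $\delta_u$ computes the extension of $\lVert\cdot\rVert_{2,q}$ to $C_r^*(\Gg,\omega)$. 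These are routine because $\Gg$ is étale — so the representations $\pi^\omega_u$ and the associated $\ell^2$-sections are concretely at hand — and the group-case version of the whole argument is due to Jolissaint~\cite{Jolissaint} and transposes to twisted étale groupoids with no essential change. Finally, for part (i) alone there is a softer route that avoids the Helffer--Sjöstrand step: the same commutator bounds give $\lVert h^{n}\rVert_{2,q}^{1/n}\to\lVert h\rVert_{C_r^*(\Gg,\omega)}$ (because $\mathrm{ad}_{D_u}^{k}(A_u^{n})$ has operator norm only polynomial in $n$ times $\lVert A_u\rVert^{n-k}$), so each seminorm of $H^\infty(\Gg,\omega)$ has the same spectral radius as the $C^*$-norm, which by itself forces spectral invariance.
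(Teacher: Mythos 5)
Your proposal is correct and follows essentially the same route as the paper: both arguments introduce the commutator derivation with the multiplication operator by the length function on each $\ell^2(\Gg_u)$, use rapid decay to establish the equivalence of the iterated-commutator seminorms with $\lVert\cdot\rVert_{2,q}$ (the paper cites Hou for this step, you prove it directly), control resolvents via the iterated Leibniz rule, and obtain closure under smooth functional calculus from the Helffer--Sj\"ostrand formula. Your spectral-radius shortcut for (i) is exactly the Blackadar--Cuntz differential-seminorm route that the paper also mentions in passing.
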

\begin{proof}
Point {\it (i)} is \cite[Theorem 4.2]{Hou}, and its proof generalizes immediately to the twisted case, as has been pointed out by \cite{Weygandt}. While only the closure under holomorphic functional calculus has been remarked upon (due to its importance to $K$-theory) the method of proof allows one to conclude more strongly closure under smooth functional calculus. Let us recall therefore some details from the proof.

Define on each $\ell^2(\Gg_u)$ a densely defined unbounded self-adjoint multiplication operator acting by $W_u \delta_\gamma \mapsto l(\gamma) \delta_\gamma$ on the standard basis $(\delta_\gamma)_{\gamma\in \Gg_u}$. For all bounded operators $T\in \Bb(\ell^2(\Gg_u))$ which preserve the domain of $W_u$ and such that the commutator $[W_u,T]$ extends to a bounded operator define $\delta_u(T) = \imath [W_u,T]\subset \Bb(\ell^2(\Gg_u))$. This results in a norm-closed derivation $\delta_u: \mathrm{Dom}(\delta_u)\subset \Bb(\ell^2(\Gg_u))\to \Bb(\ell^2(\Gg_u))$. Setting
\begin{align*}
\lVert T\rVert_{u,n} &= \lVert \delta_u^n(T)\rVert, \qquad &T\in \Bb(\ell^2(\Gg_u))\\
\tnorm{f}_n &= \sup_{u\in \Gg^{(0)}} \norm{\pi^\omega_u(f)}_{u,n}, \qquad &f\in C^*_r(\Gg,\omega).
\end{align*}
one can show using the rapid decay property (cf. the proof of \cite[Theorem 4.2]{Hou}) that $(\tnorm{\cdot}_n)_{n\in \NM}$ defines on $C_c(\Gg,\omega)$ a family of semi-norms equivalent to $(\lVert \cdot\rVert)_{2,n\in \NM}$.

Since the family $(\tnorm{\cdot})_{n\in \NM}$ are so-called differential semi-norms one can conclude that $H^\infty(\Gg,\omega)$ is spectral invariant in its $C^*$-completion and closed under smooth functional calculus \cite{BlackadarCuntz92}. 

Let us give a direct proof of the latter. It is well-known that if $T,S\in \mathrm{Dom}(\delta)$ lie in the domain of a closed derivation then $TS, T^{-1}\in \mathrm{Dom}(\delta)$ with
$$\delta(TS)=- T \delta(S) + \delta(T) S$$
and
\begin{equation}
\label{eq:leibniz}
\delta(T^{-1})=- T^{-1} \delta(T) T^{-1}
\end{equation}
and by iteration one obtains similar expressions for higher derivatives. If an element $f\in H^\infty(\Gg,\omega)$ has an inverse $a^{-1} \in C_r^*(\Gg,\omega)$ then all $\pi^\omega_u(f)$ are invertible and $$\lVert f^{-1}\rVert = \sup_{u \in \Gg^{(0)}}\lVert \pi^\omega_u(f)^{-1}\rVert < \infty$$ by definition of the norm on $C^*_r(\Gg,\omega)$. Each $\pi^\omega_u(f)^{-1}$ is in the domain of $\delta_u$ as pointed out above and due to \eqref{eq:leibniz} one has a uniform norm bound
$$\lVert \delta_u(\pi^\omega_u(f)^{-1})\rVert \leq \lVert \pi^\omega_u(f)^{-1}\rVert^2 \, \lVert \delta_u(\pi^\omega_u(f))\rVert \leq \lVert f^{-1}\rVert_{C_r^*(\Gg,\omega)}^2 \, \tnorm{f}_{1}.$$
In particular, $\tnorm{f^{-1}}_1<\infty$ is finite and iterating the argument for higher derivatives shows $f^{-1} \in H^\infty(\Gg,\omega)$.

For every $\varphi\in C_c^\infty(\RM)$ and self-adjoint operator $H$ one can write $\varphi(H)$ as a norm-convergent integral using the Dynkin-Helffer-Sjostrand formulation of the smooth functional calculus:
\begin{equation}
\label{eq:hs_formula}
\varphi(H) = \frac{1}{2\pi}\int_{D} \partial_{\overline{z}} \tilde{\varphi}(z)\frac{1}{H+z} \mathrm{d}z\wedge \mathrm{d}\overline{z}
\end{equation}
where $D\subset \CM$ is a compact region and $\tilde{\varphi}:D \to \CM$ is a smooth almost analytic function such that $\varphi(\lambda)=\tilde{\varphi}(\lambda)$ for all $\lambda\in \RM$ and such that for all $K>0$ there exists a constant $C_K$ such that $\lvert\partial_{\overline{z}} \tilde{\varphi}(x+\imath y)\rvert \leq C_K \lvert y\rvert^{K}$ (see \cite[Appendix C.2]{DerezinskiGerard} for this precise form).

With the trivial resolvent estimate $\lVert (\pi^\omega_u(h)+z)^{-1}\rVert \leq \lvert \Im m \, z\rvert^{-1}$ and iterating \eqref{eq:leibniz} one can show that there is for each $n$ a constant $c_n$, independent of $u$, such that
\begin{equation}
\label{eq:resolvent_bound}
\lVert \delta_u^{n}\left(\frac{1}{\pi^\omega_u(h)+z}\right) \rVert \leq c_n \max_{\sum_i {m_i=n}}\left(\prod_{i}\lVert \delta_u^{m_i}(h)\rVert\right) \max\left(1,\lvert \Im m\, z\rvert^{-(n+1)}\right)
\end{equation}
where the maximum is taken over all ways to decompose $n$ as a sum with strictly positive terms $m_i$.

Since each $\delta^n_u$ is a closed operator, we can prove that $\varphi(h)\in H^\infty(\Gg,\omega)$ by showing that the integral on the right-hand side of \eqref{eq:hs_formula} converges for each $n$ in the semi-norm $\sup_{u\in \Gg^{(0)}}\norm{\cdot}_{u,n}$. That is an immediate consequence of \eqref{eq:resolvent_bound} and dominated convergence applied to the right-hand side of
$$\delta_u^{n}(\pi^\omega_u(\varphi(h))) = \frac{1}{2\pi}\int_{D} \partial_{\overline{z}} \tilde{\varphi}(z) \delta_u^{n}\left(\frac{1}{\pi^\omega_u(h)+z}\right) \mathrm{d}z\wedge \mathrm{d}\overline{z}$$
which is then for each $n$ a norm-convergent integral with norm-bound independent of $u$ and, due to closedness of $\delta^n_u$, equal to the left-hand side. 
\end{proof}

Our remaining task is to prove that every element of $H^\infty(\Gg, \omega)$ has a zero-preserving approximation. That could be deduced from the recent paper \cite{FullerKarmakar} but let us still give a proof for self-containedness and since we have a short alternative argument:
\begin{proposition}
\label{prop:approximation_rapid_decay}
Let $(\Gg, T, p)$ be a field of groupoids with $\Gg$ a lcsc Hausdorff \'etale groupoid that satisfies the rapid decay property w.r.t. a length function $l$. Then any $f\in H^\infty(\Gg,\omega)$ has a zero-preserving approximation w.r.t. the norm of $C^*_r(\Gg,\omega)$.
\end{proposition}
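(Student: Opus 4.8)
The plan is to build the approximating sequence out of \emph{pointwise products} $f_n = h_n\cdot f$ with $h_n\in C_c(\Gg)$. This choice makes the construction zero-preserving almost automatically: if $\ev_{t}(f)=0$, i.e.\ $f$ vanishes on the fibre $\Gg_t$, then the pointwise product $h_n\cdot f$ vanishes on $\Gg_t$ as well, so $\ev_t(f_n)=0$ for every $n$. Hence the only real content is to arrange $h_n\cdot f\to f$ in the norm of $C_r^*(\Gg,\omega)$ with $0\le h_n\le 1$ and $\mathrm{supp}(h_n)$ compact. For this I will use the rapid decay estimate $\norm{\cdot}_{C^*_r(\Gg,\omega)}\le C\norm{\cdot}_{2,p}$, first extending it from $C_c(\Gg)$ to all of $H^\infty(\Gg,\omega)$ by density, together with the observation that $\norm{g}_\infty\le\norm{g}_{2,s,0}\le\norm{g}_{C^*_r(\Gg,\omega)}$ for \'etale $\Gg$, so that every element of $H^\infty(\Gg,\omega)$ is represented by a genuine function in $C_0(\Gg)$ and the product $h_n\cdot f$ really lands in $C_c(\Gg)$.

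The heart of the argument is the norm estimate, and here the obvious attempt --- simply letting $h_n$ exhaust $\Gg$ and estimating $\norm{(1-h_n)f}_{2,p}$ directly --- fails: the seminorm $\norm{\cdot}_{2,p}$ carries a supremum over the unit space, and since the fibres $\Gg_u$ may be infinite and not of bounded length, tails of $\sum_{\gamma\in\Gg_u}\abs{f(\gamma)}^2(1+l(\gamma))^{2p}$ need not be small uniformly in $u$. The trick that removes this difficulty is to approximate \emph{first} in the weighted seminorm and cut off afterwards: given $\varepsilon>0$, choose $g\in C_c(\Gg)$ with $\norm{f-g}_{2,p}<\varepsilon$ (possible because $C_c(\Gg)$ is dense in $H^\infty(\Gg,\omega)$ for each of its defining seminorms), and then use Urysohn's lemma to pick $h\in C_c(\Gg)$ with $0\le h\le 1$ and $h\equiv 1$ on the compact set $\mathrm{supp}(g)$. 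Then $(1-h)\cdot g\equiv 0$, so
\[
f-h\cdot f \;=\; (1-h)\cdot f \;=\; (1-h)\cdot (f-g),
\]
and because $0\le 1-h\le 1$, pointwise multiplication by $1-h$ is a contraction for every seminorm $\norm{\cdot}_{2,q}$; hence $f-h\cdot f\in H^\infty(\Gg,\omega)$ with $\norm{f-h\cdot f}_{2,p}\le\norm{f-g}_{2,p}<\varepsilon$, and the extended rapid decay inequality gives $\norm{f-h\cdot f}_{C^*_r(\Gg,\omega)}<C\varepsilon$.

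Choosing $\varepsilon=1/n$ then produces $f_n:=h_n\cdot f\in C_c(\Gg)$ with $f_n\to f$ in $C_r^*(\Gg,\omega)$, which is property (i) of Definition~\ref{def:zeropres}, while property (ii) holds by construction as observed above. The step I expect to require the most care is the one highlighted in the second paragraph: the cut-off must be a pointwise multiple of $f$ in order to preserve vanishing on fibres, yet it must also yield a small $\norm{\cdot}_{2,p}$-error in spite of the supremum over the unit space, and reconciling these two demands is exactly what the identity $f-h\cdot f=(1-h)(f-g)$ accomplishes. The remaining ingredients --- extending the rapid decay inequality from $C_c(\Gg)$ to $H^\infty(\Gg,\omega)$, identifying $H^\infty(\Gg,\omega)$ with a space of $C_0$-functions on $\Gg$, and the elementary fact that $(1-h)\cdot\varphi\in C_c(\Gg)$ whenever $\varphi\in C_c(\Gg)$ --- are routine.
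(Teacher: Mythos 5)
Your argument is correct and is essentially the paper's own proof: both approximate $f$ first by some $g\in C_c(\Gg)$ in the weighted seminorm $\lVert\cdot\rVert_{2,p}$, then cut off by a compactly supported $0\le h\le 1$ equal to $1$ on $\mathrm{supp}(g)$, and use the rapid decay inequality to convert the $\lVert\cdot\rVert_{2,p}$-estimate into a $C^*_r$-norm estimate, with zero-preservation automatic because $h\cdot f$ is a pointwise multiple of $f$. Your identity $f-h\cdot f=(1-h)(f-g)$ is a marginally cleaner way to organize the same computation (the paper uses a triangle inequality and gets the constant $2$ instead of $1$), but the route is identical.
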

\begin{proof}
Due to the rapid decay property we just need to find for any given $\epsilon>0$ some function $\tilde{f}\in C_c(\Gg)$ with $\lVert{f-\tilde{f}}\rVert_{2,p}<\epsilon$ and $\mathrm{supp}(\tilde{f})\subseteq \mathrm{supp}(f)$. By density of $C_c(\Gg)$ there is some $g\in C_c(\Gg)$ with $\norm{f-g}_{2,p}<\frac{1}{2}\epsilon$. Let $\chi\in C_c(\Gg)$ be a function with $0\leq \chi \leq 1$ which is equal to $1$ on $\mathrm{supp}(g)$.
Any $f \in H^\infty(\Gg)$ is in particular a continuous function and with $\tilde{f}(\gamma) := f(\gamma) \chi(\gamma)$ one has
$$\lVert f - \tilde{f}\rVert_{2,p} \leq \lVert{f - g}\rVert_{2,p} + \lVert{\tilde{f} - g}\rVert_{2,p}.$$
Bounding the second term using
\begin{align*}
    \lVert\tilde{f} - g\rVert_{2,s,p}^2 &= \sup_{u\in \Gg^{(0)}} \sum_{\gamma \in \Gg_u} \lvert f(\gamma) \chi(\gamma) - g(\gamma)\rvert^2 (1+l(\gamma))^{2p}\\
    &= \sup_{u\in \Gg^{(0)}} \sum_{\gamma \in \Gg_u} \lvert (f(\gamma)  - g(\gamma))\chi(\gamma)\rvert^2 (1+l(\gamma))^{2p} \leq \norm{f-g}^2_{2,s,p}
\end{align*}
and its equivalent for the $\norm{\cdot}_{2,r,p}$-norm we obtain $\lVert{f-\tilde{f}}\rVert_{2,p}\leq 2\norm{f-g}_{2,p}<\epsilon$.
\end{proof}

We can now conclude our main result:
\begin{proof} (of Theorem~\ref{th:main})
By Proposition~\ref{prop:approximation_rapid_decay} and Proposition~\ref{prop:cont_at_zero_approx} every element of $\Xx=H^\infty(\Gg,\omega)$ is upper semi-continuous at zero and $H^\infty(\Gg,\omega)$, hence the conditions of Proposition~\ref{prop:cont_approx_smooth} are satisfied. Together with Proposition~\ref{prop:lower_semi-cont} the norm function $t\in T\mapsto \norm{\ev_t(a)}$ is therefore continuous for every $a\in \Xx$. Using Proposition~\ref{prop:fields_by_sections} we conclude that the completion $\Aa=C_r^*(\Gg,\omega)$ of $\Xx$ is a continuous field of $C^*$-algebras.
\end{proof}

For fields of groupoids we will usually want to use the following obvious sufficient condition:
\begin{proposition}
\label{prop:uniform_rapid_decay}
Let $(\Gg, T, p)$ be a field of groupoids with $\Gg$ a lcsc Hausdorff \'etale groupoid and $l$ a length function on $\Gg$. For each $t\in T$ there is a length function $l_t = l \circ p\rvert_{\Gg_t}$ on $\Gg_t$. If each of the fibers $\Gg_t$ has the rapid decay property 
$$\lVert \ev_t(a) \rVert_{C_r^*(\Gg_t, \omega\rvert_{\Gg_t})}\leq C \lVert \ev_t(a)\rVert_{2,p}$$
with $C,p>0$ independent of $a\in C_c(\Gg)$ and $t\in T$ then $\Gg$ has the rapid decay property.
\end{proposition}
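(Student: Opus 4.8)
The plan is to reduce the statement to two elementary facts: that the reduced norm of $C^*_r(\Gg,\omega)$ is the supremum over $t$ of the fiberwise reduced norms, and that each weighted $\ell^2$-seminorm on $C_c(\Gg)$ dominates its fiberwise restriction. Neither requires the rapid decay hypothesis; the hypothesis enters only to interpolate between the two.

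First I would record the fiberwise decomposition of the reduced norm. Every unit $u\in\Gg^{(0)}$ lies in exactly one fiber, namely $\Gg_{p(u)}$, and $\Gg_u=(\Gg_{p(u)})_u$ as a set; since the Haar system on both $\Gg$ and $\Gg_{p(u)}$ is the counting measure and the cocycle on the fiber is $\omega\rvert_{\Gg_{p(u)}}$, the induced representation $\pi^\omega_u$ on $\ell^2(\Gg_u)$ is exactly the corresponding representation of $C^*_r(\Gg_{p(u)},\omega\rvert_{\Gg_{p(u)}})$ precomposed with $\ev_{p(u)}$. Grouping the supremum defining the reduced norm according to the value of $p$ then gives, for every $f\in C_c(\Gg,\omega)$,
\begin{equation*}
\lVert f\rVert_{C^*_r(\Gg,\omega)}=\sup_{u\in\Gg^{(0)}}\lVert\pi^\omega_u(f)\rVert=\sup_{t\in T}\lVert\ev_t(f)\rVert_{C^*_r(\Gg_t,\omega\rvert_{\Gg_t})},
\end{equation*}
which is the same identity that underlies the identification of the completion in Proposition~\ref{prop:fields_by_sections}.

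Next I would compare the weighted seminorms. Fix $t\in T$ and $u\in\Gg_t^{(0)}$; then $(\Gg_t)_u=\Gg_u$ and $l_t(\gamma)=l(\gamma)$ for all $\gamma\in\Gg_u$, so the inner sum computing $\lVert\ev_t(f)\rVert_{2,s,p}$ is literally one of the sums over which $\lVert f\rVert_{2,s,p}$ takes its supremum. Hence $\lVert\ev_t(f)\rVert_{2,s,p}\le\lVert f\rVert_{2,s,p}$, and the identical argument with $\Gg^u$ gives $\lVert\ev_t(f)\rVert_{2,r,p}\le\lVert f\rVert_{2,r,p}$, so $\lVert\ev_t(f)\rVert_{2,p}\le\lVert f\rVert_{2,p}$ for every $t$. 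Combining this with the assumed fiberwise rapid decay estimate yields
\begin{equation*}
\lVert f\rVert_{C^*_r(\Gg,\omega)}=\sup_{t\in T}\lVert\ev_t(f)\rVert_{C^*_r(\Gg_t,\omega\rvert_{\Gg_t})}\le\sup_{t\in T}C\,\lVert\ev_t(f)\rVert_{2,p}\le C\,\lVert f\rVert_{2,p},
\end{equation*}
which is precisely the $\omega$-twisted rapid decay property for $\Gg$, with the same constants $C$ and $p$.

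There is essentially no obstacle here; the one point deserving a line of care is the first identity, i.e. that the reduced norm genuinely decomposes fiberwise, but this is immediate from the fact that $p$ partitions the unit space and that each induced representation involves only a single fiber. I would also emphasize, since it is the whole content of the statement, that uniformity of $C$ and $p$ in $t$ is exactly what makes the final supremum collapse to a single constant; a merely pointwise bound would give fiberwise rapid decay but not rapid decay of $\Gg$.
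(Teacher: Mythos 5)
Your proposal is correct and follows essentially the same route as the paper: decompose the unit space as $\Gg^{(0)}=\sqcup_{t\in T}\Gg_t^{(0)}$, observe that both the reduced norm and the weighted $\ell^2$-seminorms of $a\in C_c(\Gg)$ are suprema of the corresponding fiberwise quantities, and combine with the uniform fiberwise estimate. The only cosmetic difference is that the paper states the seminorm comparison as an equality $\lVert a\rVert_{2,s,p}=\sup_{t\in T}\lVert\ev_t(a)\rVert_{2,s,p}$ rather than the one-sided bound you use, which makes no difference to the conclusion.
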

\begin{proof}
As sets we can think of $\Gg$ as a disjoint union $\Gg = \sqcup_{t\in T} \Gg_t$ of invariant subgroupoids and likewise for the unit spaces $\Gg^{(0)} = \sqcup_{t\in T} \Gg^{(0)}_t$. Expanding the definitions one has
$$\lVert a \rVert_{2,s,p} = \sup_{t\in T} \sup_{u\in \Gg_t^{(0)}} \left( \sum_{\gamma \in (\Gg_t)_u} \lvert \ev_t(a)(\gamma)\rvert^2 (1+l_t(\gamma))^{2p}\right)^{1/2} = \sup_{t\in T} \left \lVert \ev_t(a)\right\rVert_{2,s,p}$$
and similarly for the seminorm $\norm{\cdot}_{2,r,p}$ as well as
$$\lVert a\rVert_{C^*_r(\Gg,\omega)} = \sup_{t\in T} \left\lVert  \ev_t(a)\right \rVert_{C^*_r(\Gg_t,\omega\rvert_{\Gg_t})}$$
which clearly implies
$\lVert a \rVert_{C_r^*(\Gg, \omega)}\leq C \lVert a\rVert_{2,p}$.
\end{proof}

\section{Examples}
\label{sec:examples}

\subsection{Fields of finite approximating groups}

Let $G$ be a discrete group and $(H_n)_{n\in \NM}$ a decreasing sequence of normal finite-index subgroups $H_n \supset H_{n+1}$ such that $\bigcap_{n\in \NM} H_n = \{e\}$. Denote $G_n=G/H_n$ and $G_\infty=G$ to define the field of groupoids 
$$\Gg = \bigsqcup_{n\in \overline{\NM}} \{n\} \times G_n$$
over the one-point compactification $\overline{\NM}=\NM \cup \{\infty\}$ with fiber-wise operations. Groupoids of this form are sometimes called Higson-Lafforgue-Skandalis (HLS) groupoids after the authors of \cite{HigsonLafforgueSkandalis2002} where they were introduced. One can equivalently think of $\Gg$ as $G \times \overline{\NM}/\sim$ with the relation $$(n, g)\sim (m,\tilde{g}) \, \iff n=m \; \text{ and } gH_n = \tilde{g}H_m$$
which endows $\Gg$ with a quotient topology that makes it into a locally compact Hausdorff groupoid \cite{HigsonLafforgueSkandalis2002, Willett}. The map $p: (n,g) \mapsto n$ is a continuous open surjection, hence one has a field of groupoids.

In general $G$ is non-amenable, i.e. the quotient map $\pi_n: G\to G_n$ does not necessarily give rise to a homomorphism $C_r^*(G) \to C_r^*(G_n)$. However, for finite linear combinations $a=\sum_{g\in G} a_g g \in \CM G$ in the group algebra  one can canonically define an approximating sequence $\pi_n(a)=\sum_{g\in G} a_g \pi_n(g)\in \CM G_n$. Under the identification $\CM G_n \simeq C_c(G_n)$ we can think of $a$ in terms of a compactly supported coefficient function $a:G \to \CM$. For any $a\in \CM G$ the elements $(\pi_n(a))_{n\in \NM}$ combine in that sense to a continuous function $f_a\in C_c(\Gg)$ collecting all finite-dimensional approximations.

From Proposition~\ref{prop:uniform_rapid_decay} we have:

\begin{proposition}
If there is a length function on $\Gg$ such that the rapid decay property holds uniformly in $n\in \overline{\NM}$ for the quotient groups $G/H_n$ in the sense of Proposition~\ref{prop:uniform_rapid_decay} then $\Gg$ has the rapid decay property and 
\begin{enumerate}
	\item[(i)] $C_r^*(\Gg)$ is a continuous field of $C^*$-algebras over $\overline{\NM}$.
	\item[(ii)] If there is for some $a=a^*\in C_r^*(G)$ a section $\hat{a}\in C_r^*(\Gg)$ with $\ev_\infty(\hat{a}) = a$ then
	$$\lim_{n\to \infty}\sigma(\ev_n(\hat{a})) \to \sigma(a)$$
	in the Hausdorff topology.
\end{enumerate}
\end{proposition}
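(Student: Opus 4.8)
The plan is to obtain this as a direct corollary of Theorem~\ref{th:main} together with Proposition~\ref{prop:spec_cont}; essentially no new work is needed beyond bookkeeping. First I would check that $\Gg$ meets the standing hypotheses of Theorem~\ref{th:main}: by \cite{HigsonLafforgueSkandalis2002,Willett} it is a locally compact second-countable Hausdorff \'etale groupoid --- a bundle of discrete (indeed, for $n<\infty$, finite) groups over the second-countable compact, hence locally compact, Hausdorff base $\overline{\NM}$ --- with fibers $\Gg_n\cong G/H_n$ for $n\in\NM$ and $\Gg_\infty\cong G$. The uniform rapid decay hypothesis is, by construction, precisely the input required by Proposition~\ref{prop:uniform_rapid_decay} (noting that the fiber over $\infty$ is $G$ itself), so that proposition yields that $\Gg$ has the untwisted rapid decay property with respect to the given length function $l$.

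Part \emph{(i)} then follows at once by applying Theorem~\ref{th:main} with trivial twist $\omega\equiv 1$: it gives that $C_r^*(\Gg)$ is a continuous field of $C^*$-algebras over $\overline{\NM}$ with fibers $C_r^*(\Gg_t)$, that is $C_r^*(G/H_n)$ for $n<\infty$ and $C_r^*(G)$ at $t=\infty$. I would add the remark that on the dense subalgebra $C_c(\Gg)$ the finite-fiber evaluation $\ev_n$ restricts to the finite-dimensional approximation $a\mapsto\pi_n(a)$ described above, so the continuous field structure just encodes the simultaneous convergence of all these approximations.

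For part \emph{(ii)} I would first pass to a self-adjoint lift. Each $\ev_t$ is a $*$-homomorphism, so replacing $\hat a$ by $\tfrac12(\hat a+\hat a^*)$ preserves the relation $\ev_\infty(\hat a)=a$ while making $\hat a$ self-adjoint; hence we may assume $\hat a=\hat a^*\in C_r^*(\Gg)$. Applying Proposition~\ref{prop:spec_cont} to this element of the continuous field $C_r^*(\Gg)$ shows that the map $t\in\overline{\NM}\mapsto\sigma(\ev_t(\hat a))$ is continuous for the Hausdorff metric on the compact subsets of $\RM$. Since the sequence $(n)_{n\in\NM}$ converges to the point $\infty$ in $\overline{\NM}$, continuity of this map at $\infty$ is exactly the claimed statement $\lim_{n\to\infty}\sigma(\ev_n(\hat a))=\sigma(\ev_\infty(\hat a))=\sigma(a)$ in the Hausdorff topology.

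I do not expect any genuine obstacle: the real content is entirely in Theorem~\ref{th:main} and Proposition~\ref{prop:spec_cont}, and what remains is the identification of the finite-fiber evaluations with the maps $\pi_n$ and the harmless symmetrization of the section $\hat a$. It is, however, worth emphasizing in the surrounding discussion that the existence of a length function with \emph{uniform} rapid decay constants is essential here: for non-amenable $G$ lacking such a length function the spectral convergence in \emph{(ii)} genuinely fails, as HLS-type examples show --- but that is a remark on sharpness rather than a step in the proof.
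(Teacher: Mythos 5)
Your proposal is correct and follows exactly the route the paper intends: the paper states this proposition as an immediate consequence of Proposition~\ref{prop:uniform_rapid_decay} (giving rapid decay of $\Gg$), Theorem~\ref{th:main} (giving the continuous field), and Proposition~\ref{prop:spec_cont} (giving Hausdorff convergence of spectra), with your symmetrization of $\hat a$ being a harmless and correct extra detail.
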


If $G$ is finitely generated one can put the word-length metric on each $G_n$ to obtain a continuous length function (continuity at $\infty$ follows from $\bigcap_{n\in \NM} H_n = \{e\}$). Having an approximation by finite groups with uniform rapid decay would clearly be very useful for spectral computations involving group algebras as they are described in e.g. \cite{Lux2023}. However, while uniform rapid decay does hold for some groups (e.g. the free abelian groups $G=\ZM^d$, $H_n=n\ZM^d$) we are not aware of a non-amenable group which satisfies it. Indeed, spectral continuity allows us to explicitly rule out existence in certain cases:

\begin{example}
For a counter-example let $F_2$ be the free group of two generators $x,y$ and choose any decreasing sequence of finite-index subgroups. Free groups have the rapid decay property \cite{Haagerup79} (cf. \cite[Example 1.2.3]{Jolissaint}). The element  $a = x+x^{-1}+y+y^{-1}$ acting on $\ell^2(F_2)$ in the regular representation is equal to the adjacency operator of the Cayley graph. The spectrum of the latter is given by the interval $[-2\sqrt{3},2\sqrt{3}]$ \cite{Kesten,McKayLG1981}. For any sequence of normal subgroups one defines the finite-dimensional approximations $\pi_n(a)= \pi_n(x)+\pi_n(x^{-1})+\pi_n(y)+\pi_n(y^{-1})$ acting on the finite-dimensional Hilbert spaces $\ell^2(F_2/H_n)$. However, $4\in \sigma(\pi_n(a))$ as the constant function $1_{F_2/H_n}$ is an eigenvector and hence the spectra $\sigma(\pi_n(a))$ do not converge to $\sigma(a)$.  In conclusion, no approximation by finite quotient groups can satisfy the rapid decay property uniformly.
\end{example}

\begin{remark}
This example shows that it is not enough for the endpoint to have the rapid decay property. In contrast, if $G$ is amenable, then $C^*_r(\Gg)$ always decomposes as a continuous field independent of the approximating sequence of subgroups. This is a consequence of \cite[Corollary 5.6]{LandsmanContMath2001} which shows that if $\Gg$ is a field of groupoids over a space $T$ then the norms of any section are continuous at any fiber $t$ where $\Gg_t$ is amenable.
\end{remark}

\begin{remark}
It is tempting to think that in Theorem~\ref{th:amenable} one can drop amenability to the weaker condition $C^*_r(\Gg) = C^*(\Gg)$, however, this is not true: In the example above with the free group $F_2$ it is known that $C^*_r(\Gg)=C^*(\Gg)$ \cite{Willett} thus forming a counterexample. To exhibit the underlying problem note that for the reduced norm $n\in \overline{\NM}\mapsto \norm{\ev_n(a)}_{C_r^*(G_n)}$ is lower semi-continuous and for the full norm $n\in \overline{\NM}\mapsto \norm{\ev_n(a)}_{C^*(G_n)}$ is upper semi-continuous for all $a\in C_c(\Gg)$ \cite{LandsmanContMath2001}. However, the norms of $C^*_r(F_2)$ and $C^*(F_2)$ are not equivalent and thus one cannot conclude continuity of either norm function at $\infty$. For a true generalization of Theorem~\ref{th:amenable} one requires more strongly that full and reduced norms coincide on $C_c(\Gg_t)$ for each fiber.
\end{remark}
\begin{remark}
That this construction does not yield a continuous field for $G=F_2$ can be seen already from \cite{HigsonLafforgueSkandalis2002}, where it is shown that the diagram
$$0 \to C_r^*(\Gg\rvert_{\NM})\to C_r^*(\Gg)\to C_r^*(F_2)\to 0$$
induced by the restriction of unit spaces fails to be exact, but it would have to be if $C_r^*(\Gg)$ was a continuous field over $\overline{\NM}$ with fibers $C^*_r(G_n)$.
\end{remark}

\subsection{Twisted group algebras and hyperbolic crystals}

Let $G$ be a discrete group and $T$ a compact Hausdorff space. A continuous family of twists is a family of $2$-cocycles $(\omega_t)_{t\in T}$ where the functions $\omega_t: G\times G\to \Uu(\CM)$ extend to a continuous function $\omega: \Gg^{(2)}\to \Uu(\CM)$ with $\Gg=T \times G$ and the composable pairs $\Gg^{(2)}=\{((t_1,g_1),(t_2,g_2)): \; t_1=t_2\}.$

The \'etale groupoid $\Gg$ is a field of groupoids over $T$. It is easy to show that $C_r^*(\Gg,\omega)\simeq C(T)\rtimes_{r,\omega}G$ is isomorphic to a reduced twisted crossed product with a trivial $G$-action but non-trivial twist. The convolution multiplication of $C_c(\Gg)$ can therefore be represented conveniently in terms of formal Fourier series: The functions $u_g(t,\gamma)=\delta_{g,\gamma}$ define unitary elements $u_g\in C_r^*(\Gg,\omega)$ for all $g\in G$ and we can write any $f\in C_c(\Gg)$ uniquely as a series 
$$f = \sum_{g\in G} f_g \, u_g$$
with finitely many non-vanishing coefficients $f_g\in C(T)$. The unitaries $U_g$ commute with $C(T)$ and their multiplication law is
$$u_g u_h = \omega(g,h) u_{gh}$$
where $\omega(g,h)$ is interpreted as a function in $C(T)$. 

The following is obvious from Proposition~\ref{prop:twisted_rapid_decay} and Proposition~\ref{prop:uniform_rapid_decay}:
\begin{proposition}
\label{prop:twists_field}
If $G$ has the rapid decay property then $\Gg=T\times G$ has it as well and hence $C^*_r(\Gg,\omega)$ decomposes as a continuous field of twisted group algebras.
\end{proposition}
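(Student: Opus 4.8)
The plan is to verify the hypotheses of Proposition~\ref{prop:uniform_rapid_decay} for the field $(\Gg,T,p)$ with $\Gg=T\times G$ and $p(t,g)=t$, and then invoke Theorem~\ref{th:main}. First I would fix a length function $l_G:G\to\RM_+$ witnessing the rapid decay property of $G$, with constants $C,p>0$ such that $\lVert a\rVert_{C^*_r(G)}\leq C\lVert a\rVert_{2,p}$ for all $a\in\CM G$. Pulling this back along the projection gives $l(t,g):=l_G(g)$, which is a length function on $\Gg$: since $G$ is discrete and $l$ is constant in the $T$-variable it is continuous (indeed locally constant on the open bisections $T\times\{g\}$), and the subadditivity, symmetry and vanishing-on-units axioms hold fiberwise because they hold for $l_G$. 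The associated fiber length functions $l_t=l\circ p\rvert_{\Gg_t}$ of Proposition~\ref{prop:uniform_rapid_decay} are then just copies of $l_G$ under the canonical identification $\Gg_t\cong G$.

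Next I would check the uniform rapid decay estimate on the fibers. Each $\Gg_t\cong G$ carries the restricted twist $\omega\rvert_{\Gg_t}=\omega_t$, and by Proposition~\ref{prop:twisted_rapid_decay} the $\omega_t$-twisted rapid decay property holds with the \emph{same} constants $C,p$ as the untwisted property of $G$; crucially these do not depend on $t$, since they are inherited unchanged from the fixed untwisted data of $G$. Expanding definitions as in the proof of Proposition~\ref{prop:uniform_rapid_decay}, for $a\in C_c(\Gg)$ the seminorm $\lVert\ev_t(a)\rVert_{2,p}$ is computed with respect to $l_t$, so the inequality $\lVert\ev_t(a)\rVert_{C^*_r(\Gg_t,\omega_t)}\leq C\lVert\ev_t(a)\rVert_{2,p}$ holds uniformly in $a$ and $t$. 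Proposition~\ref{prop:uniform_rapid_decay} then yields that $\Gg$ itself has the $\omega$-twisted rapid decay property with respect to $l$.

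Finally, having noted that $\Gg=T\times G$ is a lcsc Hausdorff \'etale groupoid (the bisections $T\times\{g\}$, $g\in G$, exhibit \'etaleness) which is a field of groupoids over the locally compact Hausdorff space $T$ and has the $\omega$-twisted rapid decay property, Theorem~\ref{th:main} applies directly and gives that $C^*_r(\Gg,\omega)\simeq C(T)\rtimes_{r,\omega}G$ is a continuous field of $C^*$-algebras with fibers $C^*_r(\Gg_t,\omega_t)=C^*_r(G,\omega_t)$, i.e. a continuous field of twisted group algebras. There is essentially no obstacle here: the only points needing a word of care are the continuity of the pulled-back length function (immediate since $G$ is discrete) and the $t$-independence of the rapid decay constants coming out of Proposition~\ref{prop:twisted_rapid_decay}, which is precisely the content of that proposition.
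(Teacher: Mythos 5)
Your proposal is correct and follows exactly the route the paper intends: the paper declares the statement ``obvious from'' Proposition~\ref{prop:twisted_rapid_decay} and Proposition~\ref{prop:uniform_rapid_decay}, and you have simply filled in the details (pulling back the length function, noting the $t$-independence of the constants, and invoking Theorem~\ref{th:main}). No issues.
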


As an application we consider hyperbolic crystals in a magnetic field. A hyperbolic crystal $\Gamma$ shall be a discrete subset of the hyperbolic plane $\mathbb{H}^2$ corresponding to the orbit of a point under some Fuchsian group $G$:
\begin{definition}\label{def:hyp-crystal}
	Let \(G \le \mathrm{Isom}^{+}(\mathbb{H}^{2}) \cong \mathrm{PSL}(2,\mathbb{R})\) be a
	cocompact Fuchsian group, i.e.\ a discrete subgroup whose
	quotient \(\mathbb{H}^{2}/G\) is a compact surface.
	For a fixed base point \(x \in \mathbb{H}^{2}\) we call the orbit
	\[
		\Gamma \;=\; G \cdot x \;=\;
		\{ g \cdot x \mid g \in G \}
	\]
	a hyperbolic crystal.
	
	\smallskip
	\noindent
	Equivalently, a hyperbolic crystal is a discrete subset
	\(\Gamma \subset \mathbb{H}^{2}\) on which some cocompact Fuchsian group acts freely and transitively.
\end{definition}
Any such group has the rapid decay property \cite{Jolissaint}.
For applications in physics one studies tight-binding models for self-adjoint Hamiltonians which give rise to dynamics on the Hilbert space $\ell^2(\Gamma)$ (e.g. \cite{KollarNature2019} for experimentally relevant physical models, but similar ones have also been studied previously in \cite{CareyCMP1998, Carey2006, MarcolliMathai2006} due to a conjectured relation between fractional quantum Hall phases and hyperbolic geometry). 

A simple model is the discrete Laplacian on $\Gamma = G \cdot x$, where $x$ has trivial stabilizer under $G$ and hence $\Gamma$ may be identified with the vertices of any Cayley graph of $G$. The graph structure is then fixed by choosing a finite symmetric generating set $S\subset G$. The magnetic graph Laplacian takes the form
$$H^{\omega} = \sum_{\gamma \in S} U^{\omega}_\gamma$$
where $U^\omega: G \to \Bb(\ell^2(\Gamma))$ is a projective unitary representation of $G$ with a twisting $2$-cocycle such that
$$U_\gamma U_{\gamma'}=\omega(\gamma, \gamma')U_{\gamma \gamma'}.$$ 
When we identify $\Gamma$ with $G$ then we can think of the (faithful) regular representation $\pi^\omega$ of the group algebra $C^*_r(G,\omega)$ to act on $\ell^2(\Gamma)$. We can then write $H^{\omega}=\pi^\omega(h)$ as the image of the element $h=\sum_{\gamma\in S} \delta_\gamma \in C_c(G)$ which does not depend on $\omega$. As a consequence of Proposition~\ref{prop:twists_field} we therefore find:
\begin{proposition}
The spectrum of $H^\omega$ depends continuously on $\omega\in Z^2(G, \Uu(\CM))$.    
\end{proposition}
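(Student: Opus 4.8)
The plan is to recognize that the claim is essentially a corollary of Proposition~\ref{prop:twists_field} combined with Proposition~\ref{prop:spec_cont}, once we set up the right field of groupoids parametrized by the space of cocycles. First I would fix the finite symmetric generating set $S\subset G$ and the element $h=\sum_{\gamma\in S}\delta_\gamma\in C_c(G)$, noting that this element is independent of the twist. The parameter space will be $T=Z^2(G,\Uu(\CM))$ with the topology of pointwise convergence (equivalently, the subspace topology inherited from the product $\Uu(\CM)^{G\times G}$). Since a $2$-cocycle is determined by its values on pairs of group elements and the cocycle identity is a closed condition, $T$ is a closed subset of a compact space, hence compact Hausdorff; in particular it is locally compact Hausdorff as required.

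The key step is to verify that the evaluation map $\omega\mapsto \omega(g,h)$ is continuous for each fixed pair $(g,h)$, which is immediate from the definition of the topology on $T$, and therefore that the assignment $((\omega,g),(\omega,h))\mapsto \omega(g,h)$ defines a continuous function on $\Gg^{(2)}$ where $\Gg=T\times G$ carries the product topology. This exhibits $(\omega_t)_{t\in T}$ as a continuous family of twists in the sense of the preceding subsection, so $\Gg=T\times G$ is an étale field of groupoids over $T$ with a continuous twist. Since $G$ has the rapid decay property (being a cocompact Fuchsian group, by \cite{Jolissaint}), Proposition~\ref{prop:twists_field} applies and $C^*_r(\Gg,\omega)$ is a continuous field of $C^*$-algebras over $T$ with fibers $C^*_r(\Gg_t,\omega\rvert_{\Gg_t})\simeq C^*_r(G,\omega_t)$.

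Next I would observe that $\hat{h}:=\sum_{\gamma\in S} u_\gamma \in C_c(\Gg)$, built from the canonical unitaries $u_\gamma$, is a self-adjoint element (since $S$ is symmetric and the coefficient functions are the constant function $1$, so in particular $\ev_t(\hat h)=\pi^{\omega_t}(h)$ corresponds to the magnetic Laplacian for the cocycle $\omega_t$ under the identification of the regular representation). Strictly speaking one should check self-adjointness either directly from the convolution formula \eqref{eq:convolution} using $S=S^{-1}$, or simply replace $\hat h$ by $\tfrac12(\hat h+\hat h^*)$, which agrees with $\sum_{\gamma\in S}U^\omega_\gamma$ on the nose when the cocycle is normalized so that $\omega(\gamma,\gamma^{-1})=1$ and $S$ is symmetric. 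Applying Proposition~\ref{prop:spec_cont} to the self-adjoint element $\hat h$ yields that $\omega\in T\mapsto \sigma(\ev_\omega(\hat h))=\sigma(H^\omega)$ is continuous in the Hausdorff metric, which is exactly the assertion.

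The main obstacle, such as it is, lies in the bookkeeping rather than in any hard analysis: one must confirm that the topology placed on $Z^2(G,\Uu(\CM))$ is the one intended in the statement (pointwise convergence, which is the only natural choice making evaluation continuous), and one must carefully match the algebraic element $\hat h\in C_c(\Gg)$ with the physical operator $H^\omega$ via the isomorphism $C^*_r(\Gg,\omega)\simeq C(T)\rtimes_{r,\omega}G$ and the faithful regular representation $\pi^\omega$, handling the mild subtlety about self-adjointness and cocycle normalization noted above. Once those identifications are in place, the result is a direct application of the machinery already developed.
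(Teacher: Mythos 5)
Your proposal is correct and follows exactly the route the paper intends: the statement is presented as an immediate consequence of Proposition~\ref{prop:twists_field} (applied to the tautological family of cocycles over $T=Z^2(G,\Uu(\CM))$ with the pointwise-convergence topology) together with Proposition~\ref{prop:spec_cont}, using that $H^\omega=\pi^\omega(h)$ for the twist-independent element $h=\sum_{\gamma\in S}\delta_\gamma$. Your extra care about the compactness of $T$, the continuity of the tautological twist, and the self-adjointness/normalization of the cocycle fills in details the paper leaves implicit.
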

Physically, the phase factors \(\omega\) arise from parallel transport with respect to a $U(1)$-connection (magnetic field) on $\mathbb{H}^{2}$. For a constant magnetic field it is given by the {\it area cocycle} $$\omega_\theta(\gamma, \gamma')= \exp(\imath \theta A(\gamma,\gamma'))$$ with $\theta\in \RM$ having the interpretation of the magnetic field strength in some unit system and $A(\gamma,\gamma')$ the area of the hyperbolic triangle with vertices $(x, \gamma\cdot x, \gamma'\cdot x)$. 

From the well-studied example of the Euclidean plane one expects that the spectrum of $H^{\omega_\theta}$ to have a non-trivial dependence on the magnetic field strength $\theta$: there the spectrum consists of a finite collection of intervals if the magnetic field is rational and a Cantor set if it is irrational \cite{AvilaJito2009}. Little appears to be known rigorously for the spectra of discrete magnetic Laplacians of hyperbolic crystals, however, it is reasonable to expect that the twist at least in some cases may open gaps in the spectrum. This is known for the corresponding continuum model of magnetic Laplacians on the hyperbolic plane \cite{Comtet1987} and is also suggested by a recent numerical study which attempts to compute the spectrum using finite-volume approximations \cite{StegmaierPRL2022}.

\end{document}